 \def\LaTeX{\leavevmode L\raise.42ex
   \hbox{\kern-.3em\size{\sf@size}{0pt}\selectfont A}\kern-.15em\TeX}
\newcommand{\BibTeX}{{\rm B\kern-.05em{\sc
i\kern-.025emb}\kern-.08em\TeX}}
\newtheorem{col}{Corollary}[section]
\newtheorem{thm}{Theorem}[section]
\newtheorem{lem}[thm]{Lemma}
\newtheorem{rem}[thm]{Remark}
\theoremstyle{defn}
\newtheorem{defn}{Definition}
\numberwithin{equation}{section}
\newcommand{\Ltwo}{\ell^2}
\begin{document}

\title[ Exact positive cubature formulas on graphs ]{Exact positive cubature formulas via generalized sampling  on combinatorial  graphs}

\author{Isaac Z. Pesenson }

\address{Department of Mathematics, Temple University,
 Philadelphia,
PA,  19122 USA}
\email{pesenson@temple.edu}

\maketitle

\begin{abstract}

We consider a disjoint cover (partition) of an undirected weighted finite graph $G$ by $|J|$ connected subgraphs (clusters)  $\{S_{j}\}_{j\in J}$ and select  a function $\xi_{j}$ on each of the clusters. For a given signal $f$ on $G$   the set of its weighted average values samples is defined via inner products $\{\langle  \xi_{j}, f\rangle\}_{j\in J}$. The goal of the paper is to establish exact quadrature formulas with positive weights  which are exploring  these samples generated by  bandlimited functions.

\end{abstract}

\section{Introduction }

\label{sect:main}

During last years in connection with a variety of important applications, the problems about sampling, interpolation and cubature formulas in the setting of combinatorial graphs  attracted attention of many mathematicians and engineers. Here are some of the relevant papers \cite{Bab}-\cite{BabT}, \cite{Cav},   \cite{CM}, \cite{Erb}, \cite{HL}, \cite{LS}, \cite{Ortega}-\cite{WCG}.

In the present paper we consider a disjoint cover (partition) of an undirected weighted finite graph $G$ by a family of connected subgraphs (clusters)  $\mathcal{S}=\{S_{j}\}_{j\in J}$ and select  a set of functions $\Xi=  \{\zeta_{j}\}_{j\in J}$ on each of the clusters. We assume that each of the functions $\{\zeta_{j}\}_{j\in J}$  is not negative. For a given signal $f$ on $G$   the set of its weighted average values samples is defined via inner products $\{\langle  \zeta_{j}, f\rangle\}_{j\in J}$. 
For example, when $\zeta_{j}$ is a Dirac measure $\delta_{v_{j}}$ supported at a vertex $v_{j}\in S_{j}$, then 
$$
\langle \zeta_{j}, f\rangle= f(v_{j}),
$$
is just a value of $f$ at $v_{j}$. When $\zeta_{j}$ is a characteristic function of a subset $U_{j}\subseteq S_{j}$ then
$$
\langle \zeta_{j}, f\rangle=\frac{1}{|U_{j}|}\sum_{v\in U_{j}}f(v),
$$
is the average value of $f$ over $U_{j}$. 

Let's consider subspaces of bandlimited functions of bandwidth $\omega>0$ which are denoted as ${\bf E}_{\omega}(G)$  (see below for all the definitions). These subspaces are defined in terms of the eigenvalues of  a combinatorial Laplace operator $L$ on $G$.  The goal of the paper is to establish exact cubature formulas with positive weights $\{\varpi_{j}\}_{j\in J},$
  \begin{equation}\label{01}
\sum _{v\in V(G)}f(v)= \sum_{j\in J} \varpi
_{j}\langle \zeta_{j}, f\rangle,\>\>\>\>f\in {\bf E}_{\omega}(G),
 \end{equation}
 which are exploring  samples $ \{\langle  \zeta_{j}, f\rangle\}_{j\in J},$ generated by  functions from subspaces of bandlimited functions of bandwidth $\omega>0$.  We prove that there exists a such cut off frequency $\omega_{0}=\omega_{0}\left(G, \mathcal{S}, \Xi\right) $ that the formula (\ref{01}) holds for all functions in subspaces  ${\bf E}_{\omega}(G)$ with $0\leq \omega<\omega_{0}$.  However, in general there is not guarantee that for a particular graph $G$ and a choice of a pair   $\left(\mathcal{S}, \Xi\right)$ the interval $[0, \omega_{0}]$ contains non-trivial eigenvalues of a relevant Laplace operator $L$. In this connection we argue in Lemma \ref{interval}  that there are "many" \textit{community graphs} (see \cite{F}) for which  the interval $[0,\>\omega_{0}]$ contains "many" nontrivial frequencies of the corresponding  operator $L$. Moreover, we claim that it is a rather typical situation  for community graphs for which connections between communities are sufficiently weak compared to connections inside of  communities. 
 
 In section \ref{Analysis} we introduce several basic notions we use in the paper. Section \ref{finite} devoted to Poincare-type inequalities on finite graphs. In section \ref{Poinc-on-large} we extend Poincare-type inequalities to "large" finite graphs partitioned into "small" subgraphs $\{S_{j}\}$ (see Corollary \ref{PP-2}). 
This  Corollary implies that projections of  the relevant functionals onto an appropriate space ${\bf E}_{\omega}(G)$ form a Hilbert frame in this space. This fact  implies, in turn, that every $f\in  {\bf E}_{\omega}(G)$ can be stably reconstructed from a set of relevant samples. There exist several ways for a such reconstruction. Each of them can be used to obtain some exact and approximate methods for estimating   averages like $\sum_{v\in V(G)} f(v)$ (see  \cite{PPF, PesM21}). In section \ref{norm-1} we obtain Poincare-type inequalities in $\ell^{1}(G)$ norm. Our cubature formulas are proved in section \ref{cub-fla}.  It is worth to note that only in the proof of Lemma \ref{lowerbound}         we are using the assumption that the functions $\{\zeta_{j}\}_{j\in J}$ (which are used to define functionals $\{\langle \zeta_{j}, \cdot\rangle\}_{j\in J}$) are not negative. Section \ref{Appendix} contains a proof of Lemma \ref{Lemma} and the definition of Hilbert frames. 
Methodologically  this paper is a combination of  several ideas from the articles \cite{CKP} and \cite{PesM21}.

\section{Analysis  of graph signals}\label{Analysis}

Let $G$ denote an undirected weighted graph, with  a finite  number of vertices $V(G)$  and weight function $w: V(G) \times V(G) \to [0, \infty)$, $\>\>w$ is symmetric, i.e., $w(u,v) = w(v,u)$, and $w(u,u)=0$ for all $u,v \in V(G)$. The edges of the graph are the pairs $(u,v)$ with $w(u,v) \not= 0$. 
Let $\ell^{2}(G)\>\>$ denote the space of  all complex-valued functions with the inner product
$$
\left<f,g\right>=\sum_{v\in V(G)}f(v)\overline{g(v)}
$$
and  the norm
\[
\| f \|_{\ell^{2}(G)} = \left( \sum_{v \in V(G)} |f(v)|^2 \right)^{1/2}.
\] 
We will also need the space $\ell^{1}(G)$ with the norm
\[
\| f \|_{\ell^{1}(G)} =  \sum_{v \in V(G)} |f(v)|.
\] 
For a set $S\subset V(G)$ the notation $\ell^{2}(S)\left(=\ell^{1}(S)\right)$ is used for all functions in $\ell^{2}(G)$ supported on $S$. We intend to prove certain  Poincar\'e-type estimates involving weighted gradient norm.
\begin{defn}The  weighted gradient norm of a function $f$ on $V(G)$ is defined by 
\begin{equation}\label{gr}
 \| \nabla f \|_{\ell^{2}(S)} = \left( \sum_{u, v \in V(G)} \frac{1}{2} |f(u) - f(v)|^2 w(u,v) \right)^{1/2}.
\end{equation} 
\end{defn}

In  the case of a finite graph and   the weighted Laplace operator $L: L_{2}(G) \mapsto L_{2}(G)$  is introduced  via
\begin{equation}\label{L}
 (L f)(v) = \sum_{u \in V(G)} (f(v)-f(u)) w(v,u)~.
\end{equation}
We will also need the following equality which holds true for all  finite graphs 
\begin{equation}\label{L-G}
\|L^{1/2}f\|_{\ell^{2}(S)}=\|\nabla f  \|_{\ell^{2}(S)},\>\>\>\>\>f\in\ell^{2}(G).
\end{equation}

We are using the spectral theorem for the operator $L$ to introduce the associated Paley-Wiener spaces which are also known as the spaces of bandlimited functions.
\begin{defn}\label{PW}
 The Paley-Wieners space ${\bf E}_{\omega}(G) \subset \Ltwo(G)$ is the image space of the projection operator ${\bf 1}_{[0,\>\omega]}(L)$ (to be understood in the sense of Borel functional calculus). For a given $f\in {\bf E}_{\omega}(G)$ the smallest $\omega\geq 0$ such that $f\in {\bf E}_{\omega}(G)$ is called the bandwidth of $f$.
 \end{defn}
By using the Spectral theorem one can show   that a function $f$ belongs to the space  ${\bf E}_{\omega}(G)$ if and only if for every positive $t>0$ the following Bernstein inequality holds
\begin{equation}\label{B}
\|L^{t}f\|_{\ell^{2}(S)}\leq \omega^{t}\|f\|_{\ell^{2}(S)},\>\>\>t>0.
\end{equation}
For  a finite  graph $G$ its Laplacian  $L$ has a discrete spectrum   $0=\lambda_{0}<\lambda_{1}\leq...\leq \lambda_{|G|-1}$, and multiplicity of $0$ is the number of connected components of $G$. A set of corresponding orthonormal eigenfunctions will be denoted as $\{\varphi_{j}\}_{j=0}^{|G|-1}$. In this case the space ${\bf E}_{\omega}(G)$ coincides with
$$
span \{\varphi_{0}, ..., \varphi_{k} : \lambda_{k}\leq \omega,\>\>\lambda_{k+1}>\omega\}.
$$
We will also need the following definition. 
\begin{defn}\label{3}
For a given graph $G$ and a given $\tau\geq 0$  let $ \mathcal{X}_{\tau}(G) \subset \ell^{2} (G)$ denote the subset of all $f\in \ell^{2}(G)$ fulfilling the inequality $\| \nabla f \|_{\ell^{2}(S)} \le \sqrt{\tau} \| f \|_{\ell^{2}(S)}$ . 
\end{defn}
Although the sets $\mathcal{X}_{\tau}(G)$ are  closed with respect to multiplication by constants  they are not linear spaces (in contrast to ${\bf E}_{\omega}(G)$). Clearly,  for every $\omega\geq 0$ one has the inclusion  ${\bf E}_{\omega}(G)\subset \mathcal{X}_{\omega}(G)$. However, the space ${\bf E}_{\omega}(G)$ can be trivial but   the sets $\mathcal{X}_{\tau}(G)$ are never trivial (think about a  function whose norm is  bigger than its  variation). 
 Note also that  if $f$ has a very large norm its variations $\|\nabla f\|_{\ell^{2}(S)}$ can be large  too even if $\tau$ is small.

\section{A Poincare-type inequality for finite graphs. }\label{finite}

For a finite connected graph $G$ which contains more than one vertex let   $\Psi$ be a functional on $\ell^{2}(G)$ which is defined by a function $\psi\in \ell^{2}(G)$, i.e. $$
 \Psi(f)=\langle\psi, f\rangle=\sum_{v\in V(G)}\psi(v)\overline{f(v)}.
 $$
Note, that the normalized eigenfunction $\varphi_{0}$ which corresponds to the eigenvalue $\lambda_{0}=0$ is given by the formula 
$$
\frac{\chi_{G}}{\sqrt{|G|}}=\varphi_{0},
$$
 where $\chi_{G}(v)=1$ for all $v\in V(G)$. 

\begin{thm}\label{fund-Th-0}
Let $G$ be a finite connected graph which contains more than one vertex and $\Psi(\varphi_{0})=\langle \psi, \varphi_{0}\rangle$ is not zero.  If $f\in Ker(\Psi)$ then
\begin{equation}\label{fund-ineq}
\|f\|^{2}_{\ell^{2}(G)}\leq \frac{\theta}{\lambda_{1}}\|\nabla f\|^{2}_{\ell^{2}(G)},\>\>\>f\in Ker(\Psi),
\end{equation}
where $\lambda_{1}$ is the first non zero eigenvalue of the Laplacian (\ref{L}) and
\begin{equation}\label{Teta}
 \theta=    \frac{\|\psi\|_{\ell^{2}(G)}^{2}}{\left| \left< \psi, \varphi_{0}\right>\right|^{2}}.
 \end{equation}

\end{thm}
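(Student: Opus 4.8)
The plan is to work entirely in the orthonormal eigenbasis $\{\varphi_{j}\}_{j=0}^{|G|-1}$ of the Laplacian $L$ and to translate both the constraint $f\in \mathrm{Ker}(\Psi)$ and the quantities $\|f\|_{\ell^{2}(G)}^{2}$, $\|\nabla f\|_{\ell^{2}(G)}^{2}$ into statements about Fourier coefficients. First I would set $c_{j}=\langle f,\varphi_{j}\rangle$ and $\beta_{j}=\langle \psi,\varphi_{j}\rangle$, so that $f=\sum_{j}c_{j}\varphi_{j}$ and $\psi=\sum_{j}\beta_{j}\varphi_{j}$. Using the identity (\ref{L-G}) together with the spectral theorem, the gradient norm becomes $\|\nabla f\|_{\ell^{2}(G)}^{2}=\langle Lf,f\rangle=\sum_{j\geq 0}\lambda_{j}|c_{j}|^{2}=\sum_{j\geq 1}\lambda_{j}|c_{j}|^{2}$, since $\lambda_{0}=0$. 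This immediately yields the elementary spectral bound $\sum_{j\geq 1}|c_{j}|^{2}\leq \lambda_{1}^{-1}\sum_{j\geq 1}\lambda_{j}|c_{j}|^{2}=\lambda_{1}^{-1}\|\nabla f\|_{\ell^{2}(G)}^{2}$, which is the classical Poincar\'e estimate for the mean-zero part of $f$.

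The crux of the argument, and the step I expect to be the main obstacle, is to control the zero-frequency coefficient $|c_{0}|^{2}=|\langle f,\varphi_{0}\rangle|^{2}$, which the pure Poincar\'e bound cannot reach. Here is where the hypothesis $f\in \mathrm{Ker}(\Psi)$ and the assumption $\beta_{0}=\langle \psi,\varphi_{0}\rangle\neq 0$ enter. By Parseval the constraint $\langle \psi,f\rangle=0$ reads $\sum_{j\geq 0}\beta_{j}\overline{c_{j}}=0$, hence $\beta_{0}\overline{c_{0}}=-\sum_{j\geq 1}\beta_{j}\overline{c_{j}}$. Since $\beta_{0}\neq 0$, I may solve for $c_{0}$ and apply the Cauchy--Schwarz inequality to the right-hand side:
$$|\beta_{0}|^{2}\,|c_{0}|^{2}=\left|\sum_{j\geq 1}\beta_{j}\overline{c_{j}}\right|^{2}\leq \left(\sum_{j\geq 1}|\beta_{j}|^{2}\right)\left(\sum_{j\geq 1}|c_{j}|^{2}\right)\leq \left(\|\psi\|_{\ell^{2}(G)}^{2}-|\beta_{0}|^{2}\right)\sum_{j\geq 1}|c_{j}|^{2},$$
where the last step uses $\sum_{j\geq 1}|\beta_{j}|^{2}=\|\psi\|_{\ell^{2}(G)}^{2}-|\beta_{0}|^{2}$. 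Dividing by $|\beta_{0}|^{2}$ and recalling $\theta=\|\psi\|_{\ell^{2}(G)}^{2}/|\beta_{0}|^{2}$ gives $|c_{0}|^{2}\leq (\theta-1)\sum_{j\geq 1}|c_{j}|^{2}$.

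To finish I would simply assemble the pieces. Writing $\|f\|_{\ell^{2}(G)}^{2}=|c_{0}|^{2}+\sum_{j\geq 1}|c_{j}|^{2}$ and inserting the bound on $|c_{0}|^{2}$ collapses the $(\theta-1)$ and the trailing $+1$ into the single factor $\theta$:
$$\|f\|_{\ell^{2}(G)}^{2}\leq (\theta-1)\sum_{j\geq 1}|c_{j}|^{2}+\sum_{j\geq 1}|c_{j}|^{2}=\theta\sum_{j\geq 1}|c_{j}|^{2}.$$
Combining this with the spectral bound $\sum_{j\geq 1}|c_{j}|^{2}\leq \lambda_{1}^{-1}\|\nabla f\|_{\ell^{2}(G)}^{2}$ from the first paragraph yields $\|f\|_{\ell^{2}(G)}^{2}\leq \theta\lambda_{1}^{-1}\|\nabla f\|_{\ell^{2}(G)}^{2}$, which is exactly (\ref{fund-ineq}). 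The only subtlety to double-check is the role of the hypothesis $\beta_{0}\neq 0$: it is precisely what makes $\theta$ finite and what permits the division that turns the kernel constraint into a usable bound on the mean of $f$. Note that when $\psi$ is proportional to $\varphi_{0}$ one recovers $\theta=1$ and the sharp classical Poincar\'e constant, as expected.
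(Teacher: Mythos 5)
Your proof is correct and follows essentially the same route as the paper: the paper performs the identical computation---eigenbasis expansion, the kernel constraint plus Cauchy--Schwarz and Parseval to control the zero-mode coefficient $c_{0}$, then the spectral gap for the remaining modes---but packages it as the abstract Lemma \ref{Lemma} for a general non-negative self-adjoint operator $T$ and then specializes to $T=L^{1/2}$ using (\ref{L-G}). Working directly with $\|\nabla f\|_{\ell^{2}(G)}^{2}=\sum_{j\geq 1}\lambda_{j}|c_{j}|^{2}$ as you do yields the constant $\theta/\lambda_{1}$ immediately, and incidentally sidesteps a small discrepancy in the paper (the statement of Lemma \ref{Lemma} has $\lambda_{1}$ where its proof actually establishes $\lambda_{1}^{2}$; it is the $\lambda_{1}^{2}$ version that, applied to $T=L^{1/2}$ with first nonzero eigenvalue $\lambda_{1}^{1/2}$, produces exactly the $\theta/\lambda_{1}$ claimed in the theorem).
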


\begin{col}\label{col}
If $\Psi(\varphi_{0})=\langle \psi, \varphi_{0}\rangle$ is not zero then the following inequality holds for every $f\in \ell^{2}(G)$ 
\begin{equation}\label{graph-Poinc-0}
\left\|f-\frac{\Psi(f)} {\Psi(\varphi_{0})}\varphi_{0}\right\|_{\ell^{2}(G)}^{2}\leq \frac{\|\psi\|_{\ell^{2}(G)}^{2}}{\lambda_{1}\left| \Psi(\varphi_{0})\right|^{2}}\|\nabla f\|_{\ell^{2}(G)}^{2}.
\end{equation}
\end{col}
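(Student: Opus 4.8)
The plan is to deduce Corollary \ref{col} directly from Theorem \ref{fund-Th-0} by subtracting from $f$ its component along the constant eigenfunction $\varphi_{0}$. Given an arbitrary $f\in\ell^{2}(G)$, the hypothesis guarantees $\Psi(\varphi_{0})\neq 0$, so the function
$$
g=f-\frac{\Psi(f)}{\Psi(\varphi_{0})}\,\varphi_{0}
$$
is well defined. The entire argument reduces to two elementary observations about $g$, after which the inequality (\ref{fund-ineq}) applied to $g$ delivers (\ref{graph-Poinc-0}).

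First I would check that $g$ is an admissible input for Theorem \ref{fund-Th-0}, i.e. $g\in Ker(\Psi)$. Using the linearity of the functional $\Psi$ one computes
$$
\Psi(g)=\Psi(f)-\frac{\Psi(f)}{\Psi(\varphi_{0})}\,\Psi(\varphi_{0})=\Psi(f)-\Psi(f)=0 .
$$
Second, I would verify that replacing $f$ by $g$ leaves the gradient norm unchanged, namely $\|\nabla g\|_{\ell^{2}(G)}=\|\nabla f\|_{\ell^{2}(G)}$. This is because $g$ and $f$ differ only by a multiple of $\varphi_{0}=\chi_{G}/\sqrt{|G|}$, which is constant on $V(G)$; hence each difference $\varphi_{0}(u)-\varphi_{0}(v)$ vanishes and the defining formula (\ref{gr}) immediately gives the equality. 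Equivalently, one may invoke (\ref{L-G}) together with $L\varphi_{0}=\lambda_{0}\varphi_{0}=0$ and the self-adjointness of $L$ to get $\|\nabla g\|_{\ell^{2}(G)}^{2}=\langle Lg,g\rangle=\langle Lf,f\rangle=\|\nabla f\|_{\ell^{2}(G)}^{2}$.

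With these two facts in hand, applying Theorem \ref{fund-Th-0} to $g$ and recalling from (\ref{Teta}) that $\theta=\|\psi\|_{\ell^{2}(G)}^{2}/|\Psi(\varphi_{0})|^{2}$ yields
$$
\left\|f-\frac{\Psi(f)}{\Psi(\varphi_{0})}\varphi_{0}\right\|_{\ell^{2}(G)}^{2}=\|g\|_{\ell^{2}(G)}^{2}\leq\frac{\theta}{\lambda_{1}}\,\|\nabla g\|_{\ell^{2}(G)}^{2}=\frac{\|\psi\|_{\ell^{2}(G)}^{2}}{\lambda_{1}\,|\Psi(\varphi_{0})|^{2}}\,\|\nabla f\|_{\ell^{2}(G)}^{2},
$$
which is exactly (\ref{graph-Poinc-0}). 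I do not anticipate any genuine obstacle: all of the analytic substance sits in Theorem \ref{fund-Th-0}, and the corollary is essentially a one-line consequence once the decomposition of $f$ is written down. The only point deserving a moment of attention is the gradient invariance, which rests entirely on $\varphi_{0}$ being constant, equivalently on $\lambda_{0}=0$.
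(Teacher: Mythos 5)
Your proof is correct and follows exactly the route the paper intends: the paper states Corollary \ref{col} without a separate proof, but its proof of Theorem \ref{Fund-1} uses precisely your decomposition $g=f-\frac{\Psi(f)}{\Psi(\varphi_{0})}\varphi_{0}$ together with the observations that $g\in Ker(\Psi)$ and that subtracting a multiple of the constant eigenfunction $\varphi_{0}$ leaves $\|\nabla \cdot\|_{\ell^{2}(G)}$ unchanged, so the corollary is Theorem \ref{fund-Th-0} applied to $g$. The only caveat, which is shared by the paper itself, is that your step $\Psi(g)=0$ uses linearity of $\Psi$, which under the paper's convention $\Psi(f)=\langle\psi,f\rangle=\sum_{v}\psi(v)\overline{f(v)}$ is valid for real-valued signals (for genuinely complex-valued $f$ the kernel membership would require the conjugated coefficient $\overline{\Psi(f)}/\overline{\Psi(\varphi_{0})}$).
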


 Theorem  \ref{fund-Th-0} is  a particular case of the following general fact.

\begin{lem}\label{Lemma}

Let $T$ be a non-negative self-adjoint bounded operator  with a discrete spectrum (counted with multiplicities) $0=\lambda_{0}<\lambda_{1}\leq .... $ in a Hilbert space $H$. Let  $\varphi_{0}, \varphi_{1},...,$ be a corresponding set of orthonormal eigenfunctions which is a basis in $H$. 
For any non-trivial $\psi\in H$ let $H_{\psi}^{\bot}$ be a subspace of all $f\in H$ which are orthogonal to $\psi$.
If $f\in H_{\psi}^{\bot}$ then

\begin{equation}\label{fund-ineq-10}
 \lambda_{1}\frac{\left| \left< \psi, \varphi_{0}\right>\right|^{2}}{\|\psi\|_{H}^{2}}\|f\|^{2}_{H}\leq \|T f\|_{H}^{2}.
\end{equation}
\end{lem}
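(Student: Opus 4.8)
The plan is to split $f$ along the kernel direction $\varphi_0$ and its orthogonal complement, and to pit the single linear constraint $f\in H_{\psi}^{\bot}$ against the spectral gap of $T$. If $\langle\psi,\varphi_0\rangle=0$ the left-hand side of (\ref{fund-ineq-10}) vanishes while the right-hand side is non-negative, so the inequality is trivial; hence I may assume $p_0:=\langle\psi,\varphi_0\rangle\neq0$. Introduce the orthogonal decompositions $f=a\varphi_0+h$ and $\psi=p_0\varphi_0+r$, where $a=\langle f,\varphi_0\rangle$ and $h,r\perp\varphi_0$, so that $\|f\|_H^2=|a|^2+\|h\|_H^2$ and $\|\psi\|_H^2=|p_0|^2+\|r\|_H^2$.

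The first ingredient turns the orthogonality $\langle f,\psi\rangle=0$ into quantitative control of the kernel component $a$. Expanding the inner product in the two decompositions gives $a\,\overline{p_0}+\langle h,r\rangle=0$, whence by Cauchy--Schwarz $|a|\,|p_0|=|\langle h,r\rangle|\le\|h\|_H\,\|r\|_H$, that is $|a|\le(\|r\|_H/|p_0|)\,\|h\|_H$. Feeding this into $\|f\|_H^2=|a|^2+\|h\|_H^2$ and using $\|r\|_H^2+|p_0|^2=\|\psi\|_H^2$ collapses everything to the clean bound $\|f\|_H^2\le(\|\psi\|_H^2/|p_0|^2)\,\|h\|_H^2$, equivalently $\|h\|_H^2\ge(|p_0|^2/\|\psi\|_H^2)\,\|f\|_H^2$. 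The second ingredient is the spectral gap: since $T\varphi_0=\lambda_0\varphi_0=0$ we have $Tf=Th$, and $h$ lies in the closed span of the eigenvectors $\{\varphi_k\}_{k\ge1}$, all of whose eigenvalues are at least $\lambda_1$; the spectral theorem then supplies the lower bound $\|Tf\|_H^2=\|Th\|_H^2\ge\lambda_1\,\|h\|_H^2$.

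Chaining the two estimates yields $\|Tf\|_H^2\ge\lambda_1\,\|h\|_H^2\ge\lambda_1\,(|p_0|^2/\|\psi\|_H^2)\,\|f\|_H^2$, which is precisely (\ref{fund-ineq-10}). I expect the first ingredient to be the main obstacle: a single scalar orthogonality relation must be amplified into a genuine norm inequality, and this is exactly the point where the overlap $\langle\psi,\varphi_0\rangle$ of the functional with the kernel direction enters the constant; the Pythagorean identity $\|r\|_H^2+|p_0|^2=\|\psi\|_H^2$ is what keeps the Cauchy--Schwarz step lossless and produces the sharp factor $|\langle\psi,\varphi_0\rangle|^2/\|\psi\|_H^2$, after which the spectral gap delivers $\lambda_1$. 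Finally, reading the lemma for the operator governing the gradient energy, together with the identity $\|\nabla f\|_{\ell^2(S)}^2=\|L^{1/2}f\|_{\ell^2(S)}^2$, recovers Theorem \ref{fund-Th-0} and Corollary \ref{col}, with $\theta=\|\psi\|_{\ell^2(G)}^2/|\langle\psi,\varphi_0\rangle|^2$.
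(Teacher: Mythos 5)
Your decomposition $f=a\varphi_0+h$, $\psi=p_0\varphi_0+r$ and the Cauchy--Schwarz treatment of the single constraint are correct, and the resulting bound $\|f\|_H^2\le\bigl(\|\psi\|_H^2/|p_0|^2\bigr)\|h\|_H^2$ is exactly the content of the paper's Fourier-coefficient computation (there the Parseval identity $\sum_{k\ge1}|\Psi(\varphi_k)|^2=\|\psi\|_H^2-|\Psi(\varphi_0)|^2$ plays the role of your $\|r\|_H^2$); your packaging is in fact a little cleaner. The genuine error is in the spectral-gap step. Since $\|Th\|_H^2=\sum_{k\ge1}\lambda_k^2\,|\langle h,\varphi_k\rangle|^2$, the spectral theorem gives $\|Th\|_H^2\ge\lambda_1^{2}\|h\|_H^2$, with the eigenvalue \emph{squared}; your claimed bound $\|Th\|_H^2\ge\lambda_1\|h\|_H^2$ fails whenever $\lambda_1<1$ (take $h=\varphi_1$: then $\|Th\|_H^2=\lambda_1^2<\lambda_1\|h\|_H^2$). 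What your argument actually proves, once the exponent is corrected, is $\lambda_1^2\,|\langle\psi,\varphi_0\rangle|^2\,\|f\|_H^2/\|\psi\|_H^2\le\|Tf\|_H^2$.

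Interestingly, that corrected squared version is precisely what the paper's own proof establishes --- its final display reads $\ldots\le\frac{\|\psi\|^{2}_{H}}{\lambda_{1}^{2}}\|Tf\|^{2}_{H}$ --- so the first power of $\lambda_1$ in the lemma's display (\ref{fund-ineq-10}) is evidently a typo for $\lambda_1^2$; as literally stated the lemma is false, by the same example $\psi=\varphi_0$, $f=\varphi_1$, $\lambda_1<1$. The squared version is also the one needed downstream: applying it to $T=L^{1/2}$, whose spectral gap is $\lambda_1^{1/2}$, produces the constant $\bigl(\lambda_1^{1/2}\bigr)^2=\lambda_1$ in Theorem \ref{fund-Th-0}, whereas your first-power version applied to $L^{1/2}$ would only yield $\lambda_1^{1/2}$ and would not recover Theorem \ref{fund-Th-0} as your closing sentence asserts. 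So: fix the single exponent in the spectral step, restate the conclusion with $\lambda_1^2$, and your proof is complete and essentially the same argument as the paper's, in a tidier form.
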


Applying this Lemma to the operator $L^{1/2}$ with eigenvalues $\lambda_{k}^{1/2}$ and using equality (\ref{L-G}) we obtain Theorem \ref{fund-Th-0}.

\begin{thm}\label{Fund-1}
If $\Psi(\varphi_{0})=\langle \psi, \varphi_{0}\rangle$ is not zero then the following inequality holds for every $f\in \ell^{2}(G)$ and every $\epsilon>0$
 \begin{equation}\label{one-set}
\|f\|^{2}_{\ell^{2}(G)}\leq (1+\epsilon) \frac{\|\psi\|_{\ell^{2}(S)}^{2}}{\lambda_{1}\left| \Psi(\varphi_{0})\right|^{2}}\|\nabla f\|^{2}_{\ell^{2}(G)}  +
  \frac{1+\epsilon}{\epsilon}\frac{1}{|\Psi(\varphi_{0})|^{2}}|\langle \psi,  f\rangle|^{2}.
 \end{equation}
\end{thm}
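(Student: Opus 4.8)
The plan is to reduce the general inequality to the kernel estimate already in hand. The key observation is that Corollary \ref{col} controls the distance from $f$ to its ``mean part'' $\frac{\Psi(f)}{\Psi(\varphi_{0})}\varphi_{0}$ in terms of $\|\nabla f\|_{\ell^{2}(G)}$, so it only remains to account for that mean part and to split the resulting sum of two terms by a weighted triangle inequality.

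First I would set $c=\frac{\Psi(f)}{\Psi(\varphi_{0})}$ and $g=f-c\varphi_{0}$, so that $f=g+c\varphi_{0}$ and $\Psi(g)=\Psi(f)-c\,\Psi(\varphi_{0})=0$, that is, $g\in Ker(\Psi)$. Since $\varphi_{0}$ is normalized, $\|c\varphi_{0}\|_{\ell^{2}(G)}=|c|$, and the ordinary triangle inequality gives
$$\|f\|_{\ell^{2}(G)}\leq \|g\|_{\ell^{2}(G)}+|c|.$$
Next I would apply the elementary inequality $(a+b)^{2}\leq (1+\epsilon)a^{2}+\frac{1+\epsilon}{\epsilon}b^{2}$, valid for all $a,b\geq 0$ and $\epsilon>0$ (it follows at once from $(\sqrt{\epsilon}\,a-b/\sqrt{\epsilon})^{2}\geq 0$), with $a=\|g\|_{\ell^{2}(G)}$ and $b=|c|$, obtaining
$$\|f\|^{2}_{\ell^{2}(G)}\leq (1+\epsilon)\|g\|^{2}_{\ell^{2}(G)}+\frac{1+\epsilon}{\epsilon}|c|^{2}.$$

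Then I would estimate the two terms separately. Because $g=f-\frac{\Psi(f)}{\Psi(\varphi_{0})}\varphi_{0}$ is exactly the function appearing in Corollary \ref{col}, that corollary yields
$$\|g\|^{2}_{\ell^{2}(G)}\leq \frac{\|\psi\|_{\ell^{2}(G)}^{2}}{\lambda_{1}|\Psi(\varphi_{0})|^{2}}\|\nabla f\|^{2}_{\ell^{2}(G)},$$
while for the mean part $|c|^{2}=\frac{|\Psi(f)|^{2}}{|\Psi(\varphi_{0})|^{2}}=\frac{|\langle\psi,f\rangle|^{2}}{|\Psi(\varphi_{0})|^{2}}$. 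Substituting both into the previous display produces precisely (\ref{one-set}).

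There is essentially no hard step: the entire content sits in Corollary \ref{col} (equivalently in Theorem \ref{fund-Th-0} via Lemma \ref{Lemma}). The only points needing care are choosing the splitting so that the coefficients $(1+\epsilon)$ and $\frac{1+\epsilon}{\epsilon}$ match the statement, and the harmless normalization $\|\varphi_{0}\|_{\ell^{2}(G)}=1$. As a sanity check one may instead apply Theorem \ref{fund-Th-0} directly to $g\in Ker(\Psi)$ and use that $\varphi_{0}$ is constant, so $\nabla\varphi_{0}=0$ and hence $\|\nabla g\|_{\ell^{2}(G)}=\|\nabla f\|_{\ell^{2}(G)}$; this is why the gradient norm of $f$, rather than of $g$, appears on the right-hand side.
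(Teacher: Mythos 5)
Your proof is correct and follows essentially the same route as the paper: the paper likewise splits $f$ into $\bigl(f-\tfrac{\Psi(f)}{\Psi(\varphi_{0})}\varphi_{0}\bigr)+\tfrac{\Psi(f)}{\Psi(\varphi_{0})}\varphi_{0}$, applies the weighted inequality $|X|^{2}\leq(1+\epsilon)|X-Y|^{2}+\tfrac{1+\epsilon}{\epsilon}|Y|^{2}$, and then bounds the kernel part via Corollary \ref{col}. Your explicit verification that $g\in Ker(\Psi)$ and your remark that $\nabla\varphi_{0}=0$ (so the gradient of $f$ appears on the right) only make explicit what the paper leaves implicit.
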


\begin{proof}

We will need the inequality 
\begin{equation}\label{algebra}
|X|^{2}\leq        (1+\epsilon)           \left|X-Y\right|^{2}+  \frac{1+\epsilon}{\epsilon}\left|Y\right|^{2},\>\>\epsilon>0, 
\end{equation}
which follows from the inequalities
$
|X|^{2}\leq |A-B|^{2}+2|A-B||B|+|B|^{2},
$
and
$
2|A-B||B|\leq\epsilon^{-1}|X-Y|^{2}+\alpha|Y|^{2},\>\>\>\>\epsilon>0.
$
Using  inequality
(\ref{algebra}) we obtain
$$
\|f\|_{\ell^{2}(G)}^{2}\leq 
  (1+\epsilon)     \left\|f-\frac{\langle \psi,  f\rangle}{\Psi(\varphi_{0})}\varphi_{0}\right \|_{\ell^{2}(G)}^{2}+ \frac{1+\epsilon}{\epsilon}    \frac{\left|\langle \psi,  f\rangle\right|^{2}}{\left|\Psi(\varphi_{0})\right|^{2}}.
$$
Note, that if $\Psi(f)= \left<\psi, \>\varphi_{0}\right>\neq 0$ then $f-\frac{\left<\psi,\>f\right>}{\left<\psi, \>\varphi_{0}\right>}\varphi_{0}$ belongs to $H_{\psi}^{\bot}$. According to   Corollary \ref{col}   the first term on the right is dominated by 
\begin{equation}\label{graph-Poinc-0}
(1+\epsilon) \frac{\|\psi\|_{\ell^{2}(S)}^{2}}{\lambda_{1}\left| \Psi(\varphi_{0})\right|^{2}}\|\nabla f\|_{\ell^{2}(G)}^{2}.
\end{equation}
The proof is complete.

\end{proof}

\section{Generalized  Poincare-type inequalities for finite and infinite graphs and sampling theorems.}\label{Poinc-on-large}
\subsection{Generalized  Poincare-type inequalities in $\ell^{2}(G)$.}

For a finite  $G$  we consider the following assumptions and notations.

\bigskip

\textit{We assume that 
$\mathcal{S}=\{S_{j}\}_{j\in J}$ form a disjoint cover of $V(G)$
\begin{equation}\label{cover}
 \bigcup_{j\in J}S_{j}=V(G).
\end{equation}}

\textit{Let $L_{j}$  be the Laplacian  for the {\bf induced subgraph} $S_{j}$. In order to insure that $L_{j}$ has at least one non zero eigenvalue, we assume  that every $S_{j}\subset V(G),\>\>j\in J, $ is a \textit{finite and connected subset of vertices with more than one vertex. } The spectrum of the operator  $L_{j}$ will be denoted as $0=\lambda_{0,j}< \lambda_{1, j}\leq ... \leq \lambda_{|S_{j}|, j} $ and the corresponding o.n.b. of eigenfunctions as $\{\varphi_{k,j}\}_{k=0}^{|S_{j}|}$. Thus the first non-zero eigenvalue for a subgraph $S_{j}$ is $\lambda_{1, j}$.}

\textit{Let $\|\nabla_{j}f_{j}\|_{\ell^{2}(S_{j})}$ be the weighted gradient for the  induced subgraph  $S_{j}$.
With every $S_{j},\>\>j\in J,$ we associate a function $\psi_{j}\in \ell^{2}(G)$ whose support is in $S_{j}$
and introduce  the  functionals $\Psi_{j}$ on $\ell^{2}(G)$ defined by these functions
\begin{equation}\label{functional}
\Psi_{j}(f)= \langle \psi_{j}, f\rangle=\sum_{v\in V(S_{j})}  \psi_{j}(v)     \overline{f(v)},\>\>\>\>f\in \ell^{2}(G).
\end{equation}
Notation $\chi_{j}$ will be used for the characteristic function of   $S_{j}$ and  we use $f_{j}$ for $f\chi_{j},\>\>f\in \ell^{2}(G)$. }

\bigskip

As usual, the induced graph $S_{j}$ has the same vertices as the set $S_{j}$ but only such edges of $E(G)$ which have both ends in $S_{j}$.
 We define the following notations

$$
\theta_{j}=     \frac{\langle \psi_{j}, f\rangle} {\langle \psi_{j}, \varphi_{0,j}\rangle},\>\>\>\>\>\>\>
\Xi=\left(\{S_{j}\}_{j\in J},\>\{\psi_{j}\}_{j\in J}\right),
$$
and
$$
\Theta_{j}
=\frac{ \|\psi_{j}\|^{2} }  { \lambda_{1,j}\left| \left< \psi_{j}, \varphi_{0, j}\right>\right|^{2}},\>\>\>\>\>\>\Theta_{\Xi}=\max_{j\in J}\Theta_{ j},
$$ 
and then introduce the function
$$
\zeta_{j}=\frac{\psi_{j}}{|S_{j}|^{1/2}\langle \psi_{j}, \varphi_{0,j}\rangle},
$$
which defines the functional
\begin{equation}\label{zetas}
     \langle \zeta_{j}, f\rangle =  \frac{\langle \psi_{j}, f\rangle }{|S_{j}|^{1/2}\langle \psi_{j}, \varphi_{0,j}\rangle}.
\end{equation}
Applying  Corollary  \ref{col} and Theorem \ref{Fund-1} to every $\nabla_{j}$ in the space $\ell^{2}(S_{j})$  and using the relations
\begin{equation}\label{loc-glob}
\sum_{j\in J}\|\nabla_{j}f_{j}\|_{\ell^{2}(G)}^{2}\leq \|\nabla f\|_{\ell^{2}(G)}^{2}=\|L^{1/2}f\|_{\ell^{2}(G)}^{2},
\end{equation}
we obtain the following result.
\begin{thm}\label{Main-Th}
Let $G$ be a connected finite   graph and $\mathcal{S}=\{S_{j}\}$ is its disjoint cover by finite sets.  Let $L_{j}$  be the Laplace operator of the  induced  subgraph $S_{j}$ whose first nonzero eigenvalue is $\lambda_{1, j}$ and $\varphi_{0, j}=1/\sqrt{|S_{j}|}$ is its normalized eigenfunction with eigenvalue zero. Assume that  for every $j$  function $\psi_{j}\in  \ell^{2}(G)$ has support in $S_{j}$,  $\>\>\>\Psi_{j}(f)= \langle \psi_{j}, f \rangle,$ and $\Psi_{j}(\varphi_{0,j})=\langle \psi_{j}, \varphi_{0, j}\rangle \neq 0$. Then the following inequality  holds true 
for every $f\in \ell^{2}(G)$ and $f_{j}=f\chi_{j}$

\begin{equation}\label{graph-Poinc-00}
\sum_{j\in J} \sum_{v\in V(S_{j})} \left|f_{j}(v)-     \langle \zeta_{j}, f\rangle\chi_{j}(v) \right|^2              \leq               \Theta_{\Xi}   \|\nabla f\|_{\ell^{2}(G)}^{2},\>\>f\in \ell^{2}(G),
\end{equation}
\end{thm}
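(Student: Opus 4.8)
The plan is to localize Corollary~\ref{col} to each cluster $S_{j}$ and then reassemble the cluster-wise estimates by means of the subadditivity relation~(\ref{loc-glob}). Each induced subgraph $S_{j}$ is finite, connected, and has more than one vertex, so its Laplacian $L_{j}$ has a genuine first nonzero eigenvalue $\lambda_{1,j}$ and normalized null eigenfunction $\varphi_{0,j}=1/\sqrt{|S_{j}|}$. The function $\psi_{j}$ is supported in $S_{j}$ and satisfies $\langle\psi_{j},\varphi_{0,j}\rangle\neq0$ by hypothesis, which is exactly the nondegeneracy condition required to invoke Corollary~\ref{col} on the space $\ell^{2}(S_{j})$ with the gradient $\nabla_{j}$ and the functional $\Psi_{j}$. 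Applying that corollary to $f_{j}=f\chi_{j}$ yields, for each $j$,
\begin{equation*}
\left\|f_{j}-\frac{\Psi_{j}(f)}{\Psi_{j}(\varphi_{0,j})}\varphi_{0,j}\right\|_{\ell^{2}(S_{j})}^{2}\leq\frac{\|\psi_{j}\|^{2}}{\lambda_{1,j}\,|\Psi_{j}(\varphi_{0,j})|^{2}}\,\|\nabla_{j}f_{j}\|_{\ell^{2}(S_{j})}^{2}=\Theta_{j}\,\|\nabla_{j}f_{j}\|_{\ell^{2}(S_{j})}^{2}.
\end{equation*}

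The next step is the algebraic identification that turns the abstract projection term into the prescribed sample $\langle\zeta_{j},f\rangle$. Since $\varphi_{0,j}(v)=1/\sqrt{|S_{j}|}$ for every $v\in S_{j}$, the projection coefficient evaluates to
\begin{equation*}
\frac{\Psi_{j}(f)}{\Psi_{j}(\varphi_{0,j})}\varphi_{0,j}(v)=\frac{\langle\psi_{j},f\rangle}{|S_{j}|^{1/2}\langle\psi_{j},\varphi_{0,j}\rangle}=\langle\zeta_{j},f\rangle,\qquad v\in S_{j},
\end{equation*}
which is precisely definition~(\ref{zetas}). Hence the inner summand over $S_{j}$ in~(\ref{graph-Poinc-00}) is exactly the left-hand side of the localized inequality above, and we obtain $\sum_{v\in V(S_{j})}|f_{j}(v)-\langle\zeta_{j},f\rangle\chi_{j}(v)|^{2}\leq\Theta_{j}\|\nabla_{j}f_{j}\|_{\ell^{2}(S_{j})}^{2}$ for each $j$.

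Finally I would sum over $j\in J$. Replacing each constant $\Theta_{j}$ by its maximum $\Theta_{\Xi}=\max_{j}\Theta_{j}$ gives
\begin{equation*}
\sum_{j\in J}\sum_{v\in V(S_{j})}\left|f_{j}(v)-\langle\zeta_{j},f\rangle\chi_{j}(v)\right|^{2}\leq\Theta_{\Xi}\sum_{j\in J}\|\nabla_{j}f_{j}\|_{\ell^{2}(S_{j})}^{2},
\end{equation*}
and the subadditivity relation~(\ref{loc-glob}), which holds because the disjoint cover makes each within-cluster edge contribute to exactly one local gradient while inter-cluster edges are simply discarded, bounds the remaining sum by $\|\nabla f\|_{\ell^{2}(G)}^{2}$. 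This chain produces~(\ref{graph-Poinc-00}). The argument is essentially bookkeeping once Corollary~\ref{col} is in hand; the only point demanding care is the elementary but crucial verification that the projection onto the cluster constants coincides with the sample functional $\langle\zeta_{j},\cdot\rangle$ of~(\ref{zetas}), together with confirming that the hypotheses of Corollary~\ref{col} genuinely hold on every induced subgraph $S_{j}$.
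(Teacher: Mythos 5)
Your proof is correct and takes essentially the same route as the paper: the paper's own one-sentence argument is precisely to apply Corollary \ref{col} to each induced subgraph $S_{j}$ (with gradient $\nabla_{j}$ and functional $\Psi_{j}$) and then combine the cluster-wise bounds, after replacing each $\Theta_{j}$ by $\Theta_{\Xi}$, via the subadditivity relation (\ref{loc-glob}). Your write-up merely makes explicit the step the paper leaves implicit, namely that the projection coefficient $\Psi_{j}(f)\varphi_{0,j}(v)/\Psi_{j}(\varphi_{0,j})$ coincides with the sample $\langle \zeta_{j}, f\rangle$ of (\ref{zetas}).
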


Set
$
A_{\Xi}=\max_{j\in J}\left(\frac{\theta_{j}}{\lambda_{1,j}}\right).
$
Using (\ref{graph-Poinc-00}) the next   statement about a  Plancherel-Polya-type  (or Marcinkiewicz-Zygmund-type, or frame) inequalities can be proved (see \cite{PesM21}).

\begin{col}\label{PP-2} Assume that all the assumptions of Theorem \ref{Main-Th} hold 
and
$$
a=\max_{j\in J} \frac{1}{|\Psi_{j}(\varphi_{0, j})|^{2}}, \>\>c=\max_{j\in J}\|\psi_{j}\|^{2}.\>\>
$$
For every $f\in {\bf E}_{\omega}(G)$ with $\omega$ satisfying 
\begin{equation}\label{main-cond}
0\leq \omega<\frac{1}{A_{ \Xi   }      },\>\>\>\>\>\>
\end{equation}
the Plancherel-Polya-type inequalities hold true
\begin{equation}\label{P-P-11}
\frac{(1-\mu)\tau}{(1+\tau)a}             \|f\|^{2}\leq  \sum_{j\in J}|\langle \zeta_{j}, f\rangle|^{2}\leq c\|f\|^{2},
\end{equation}
for those $\tau>0$ for which the inequality 
\begin{equation}\label{2main-cond}
\mu=\omega(1+\tau)A_{  \Xi   } <1,
\end{equation}
holds.
\end{col}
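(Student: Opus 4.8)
The plan is to derive the two-sided bound (\ref{P-P-11}) from the Poincar\'e-type inequality (\ref{graph-Poinc-00}) of Theorem \ref{Main-Th} together with the Bernstein inequality (\ref{B}) that characterizes the Paley-Wiener space ${\bf E}_{\omega}(G)$. The upper bound should be nearly immediate: since each $\psi_{j}$ is supported on $S_{j}$ and the sets $\{S_{j}\}$ form a disjoint cover, I would estimate $|\langle \zeta_{j}, f\rangle|$ via Cauchy-Schwarz by $\|\zeta_{j}\|\,\|f_{j}\|$, note that $\|\zeta_{j}\|^{2}=\|\psi_{j}\|^{2}/\bigl(|S_{j}|\,|\langle\psi_{j},\varphi_{0,j}\rangle|^{2}\bigr)$, and sum over $j$. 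Controlling $\sum_{j}\|\zeta_{j}\|^{2}\|f_{j}\|^{2}$ by $c\,\|f\|^{2}$ uses $\sum_{j}\|f_{j}\|^{2}=\|f\|^{2}$ (disjointness of the cover) and the definitions $a,c$; a little care with the $|S_{j}|^{1/2}$ normalization in $\zeta_{j}$ is needed, but this is routine.

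The lower bound is the substantive part. I would start from the triangle inequality in $\ell^{2}(G)$, writing $\|f\|$ as the norm of $\sum_{j}f_{j}$ and splitting each $f_{j}$ as $f_{j}=\bigl(f_{j}-\langle\zeta_{j},f\rangle\chi_{j}\bigr)+\langle\zeta_{j},f\rangle\chi_{j}$. The first piece is governed by (\ref{graph-Poinc-00}), whose right-hand side is $\Theta_{\Xi}\|\nabla f\|^{2}=\Theta_{\Xi}\|L^{1/2}f\|^{2}$; for $f\in{\bf E}_{\omega}(G)$ the Bernstein inequality (\ref{B}) with $t=1/2$ gives $\|L^{1/2}f\|^{2}\le\omega\|f\|^{2}$, so this first piece is bounded by $\Theta_{\Xi}\,\omega\,\|f\|^{2}$. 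The second piece has squared norm $\sum_{j}|\langle\zeta_{j},f\rangle|^{2}\|\chi_{j}\|^{2}=\sum_{j}|S_{j}|\,|\langle\zeta_{j},f\rangle|^{2}$, which I would then compare to $\sum_{j}|\langle\zeta_{j},f\rangle|^{2}$ — here the $|S_{j}|^{1/2}$ built into $\zeta_{j}$ is precisely what converts $\|\chi_{j}\|=|S_{j}|^{1/2}$ into the clean sampling sum. Combining via an inequality of the form $\|f\|^{2}\le(1+\tau)\|\text{error}\|^{2}+(1+\tau^{-1})\|\text{sample part}\|^{2}$ (the same elementary device as (\ref{algebra}), with parameter $\tau$) yields $\|f\|^{2}\le(1+\tau)\Theta_{\Xi}\omega\|f\|^{2}+(1+\tau^{-1})\,\frac{1}{a^{-1}}\sum_{j}|\langle\zeta_{j},f\rangle|^{2}$ after inserting $a$.

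The final step is to absorb the $\|f\|^{2}$ term from the error contribution into the left side. This is exactly where the frequency restriction enters: with $A_{\Xi}=\max_{j}(\theta_{j}/\lambda_{1,j})$ playing the role of $\Theta_{\Xi}$ up to the $\psi_{j}$-dependent constants, the quantity $\mu=\omega(1+\tau)A_{\Xi}$ must be strictly less than $1$ so that $1-\mu>0$ and division is legitimate. Rearranging gives $(1-\mu)\|f\|^{2}\le(1+\tau^{-1})\,a^{-1}\sum_{j}|\langle\zeta_{j},f\rangle|^{2}$, and multiplying out the $\tau$-factors reproduces the stated constant $(1-\mu)\tau/\bigl((1+\tau)a\bigr)$ on the left of (\ref{P-P-11}).

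I expect the main obstacle to be bookkeeping rather than conceptual: matching the constants $\Theta_{\Xi}$, $A_{\Xi}$, $a$, and $c$ exactly as defined so that the splitting parameter $\tau$ and the condition $\mu<1$ line up with (\ref{main-cond}) and (\ref{2main-cond}). In particular one must verify that the factor $|S_{j}|$ coming from $\|\chi_{j}\|^{2}$ cancels the $|S_{j}|^{-1}$ hidden in $\zeta_{j}$, and that replacing $\Theta_{\Xi}$ by $A_{\Xi}$ in the error estimate is consistent with the definitions of $\theta_{j}$ and $\Theta_{j}$. Once the constants are tracked correctly, the frequency cutoff $\omega<1/A_{\Xi}$ guarantees a positive $\tau$ with $\mu<1$, and the lower Plancherel-Polya bound follows.
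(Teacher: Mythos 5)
Your overall route is the same as the paper's: the paper obtains this corollary by applying the cluster-wise Poincar\'e inequality \eqref{graph-Poinc-00} (equivalently, Theorem \ref{Fund-1} on each $S_{j}$, summed via \eqref{loc-glob}), then the Bernstein inequality \eqref{B} with $t=1/2$, then the splitting device \eqref{algebra} with parameter $\tau$ and absorption under \eqref{2main-cond} --- exactly the chain you describe (the paper itself only sketches this and defers to \cite{PesM21}). Conceptually there is nothing to object to.

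However, the step you dismiss as ``routine bookkeeping'' is the one place where something genuinely has to be checked, and it does not come out as you assert. Carrying out your splitting, the sample part equals
$$
\sum_{j\in J} |S_{j}|\,|\langle \zeta_{j}, f\rangle|^{2}
=\sum_{j\in J}\frac{|\langle \psi_{j}, f\rangle|^{2}}{|\langle \psi_{j},\varphi_{0,j}\rangle|^{2}}
\le a\sum_{j\in J}|\langle \psi_{j}, f\rangle|^{2},
$$
so what the argument naturally proves is the two-sided inequality \eqref{P-P-11} for the samples $\langle \psi_{j},f\rangle$ with the stated constants $a$ and $c$ (the upper bound $\sum_{j}|\langle\psi_{j},f\rangle|^{2}\le c\|f\|^{2}$ being the clean Cauchy--Schwarz step). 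Passing to the $\zeta_{j}$-samples is \emph{not} a cancellation: it reintroduces the per-cluster factors $|S_{j}|\,|\langle\psi_{j},\varphi_{0,j}\rangle|^{2}=\bigl|\sum_{v\in S_{j}}\psi_{j}(v)\bigr|^{2}$, which are not controlled by $a$ or $c$. A quick way to see that your claimed bounds (e.g.\ $\max_{j}\|\zeta_{j}\|^{2}\le c$) cannot hold in general: by \eqref{zetas} every $\langle\zeta_{j},f\rangle$ is invariant under rescaling $\psi_{j}\mapsto t\psi_{j}$, whereas $a$ and $c$ are not, so \eqref{P-P-11} read literally with $\zeta_{j}$-samples and these constants fails for some normalizations of the $\psi_{j}$. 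The two readings coincide exactly when each $\psi_{j}$ has unit sum, $\sum_{v\in S_{j}}\psi_{j}(v)=1$; then $\langle\psi_{j},\varphi_{0,j}\rangle=|S_{j}|^{-1/2}$, hence $\zeta_{j}=\psi_{j}$ and $a=\max_{j}|S_{j}|$. This is evidently the normalization the paper has in mind (compare the Dirac and averaging examples in the introduction), and your write-up should either impose it explicitly or state and prove the $\psi_{j}$-version, rather than treat the conversion as routine.
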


\begin{rem}
This  Corollary implies that projections of functionals onto the space ${\bf E}_{\omega}(G)$ where $ \omega$ satisfies (\ref{2main-cond}) form a Hilbert frame in this space. This fact  implies, in turn, that every $f\in  {\bf E}_{\omega}(G)$ can be reconstructed from the values $\{\langle \zeta_{j}, f\rangle\}$. There exist several ways for a such reconstruction. Each of them can be used to obtain some approximate methods for estimating   averages  $\sum_{v\in V(G)} f(v)$ (see  \cite{PPF, PesM21}).
\end{rem}

\section{Inequalities in the $\ell^{1}(G)$ norm}\label{norm-1}

\begin{thm}\label{norm-one}
For a given $\omega>0$ the following inequality holds 
 for every $f\in \mathcal{X}_{\omega}(G)$ 
\begin{equation}\label{fla-norm-one}
  \sum_{j\in J}\sum_{v\in V(S_{j})} \left|f_{j}(v)-     \langle \zeta_{j}, f\rangle\chi_{j}(v)\right| \leq \sqrt{\omega}\>  C_{\Xi}  \sum_{v\in G}|f(v)|  ,\>\>\>\>\>f_{j}=f|_{S_{j}},
 \end{equation}
where  
\begin{equation}\label{const}
C_{\Xi}=\sqrt{    |J|\max_{j\in J}\left(  |S_{j}|\Theta_{ j} \right)  }.
\end{equation}
\end{thm}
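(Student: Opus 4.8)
The plan is to transfer the $\ell^2$ localized Poincar\'e estimate behind Theorem~\ref{Main-Th} into an $\ell^1$ statement by two applications of the Cauchy--Schwarz inequality, one internal to each cluster $S_j$ and one across the index set $J$, and then to convert the resulting gradient norm into an $\ell^1$ norm using the defining property of $\mathcal{X}_{\omega}(G)$ together with the elementary bound $\|f\|_{\ell^2(G)}\le\|f\|_{\ell^1(G)}$.

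First I would record the per-cluster form of Corollary~\ref{col}. Applying it on the induced subgraph $S_j$ with the functional $\Psi_j$ and using $\varphi_{0,j}=\chi_j/\sqrt{|S_j|}$, the correction term collapses exactly to $\langle\zeta_j,f\rangle\chi_j$ by the definition (\ref{zetas}) of $\zeta_j$, so that
\begin{equation*}
\sum_{v\in V(S_j)}\bigl|f_j(v)-\langle\zeta_j,f\rangle\chi_j(v)\bigr|^2\le\Theta_j\,\|\nabla_j f_j\|_{\ell^2(S_j)}^2
\end{equation*}
for each $j\in J$. Next, on each cluster I would pass from the $\ell^1$ sum to the $\ell^2$ sum by Cauchy--Schwarz over the $|S_j|$ vertices of $S_j$, picking up a factor $\sqrt{|S_j|}$, and then insert the displayed Poincar\'e bound; this yields
\begin{equation*}
\sum_{v\in V(S_j)}\bigl|f_j(v)-\langle\zeta_j,f\rangle\chi_j(v)\bigr|\le\sqrt{|S_j|\,\Theta_j}\;\|\nabla_j f_j\|_{\ell^2(S_j)}.
\end{equation*}

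Summing over $j$, I would bound each coefficient by $\sqrt{\max_{j}(|S_j|\Theta_j)}$ and apply Cauchy--Schwarz a second time over the $|J|$ clusters, producing a factor $\sqrt{|J|}$ and the $\ell^2$-summed gradient $\bigl(\sum_{j}\|\nabla_j f_j\|_{\ell^2(S_j)}^2\bigr)^{1/2}$. The superadditivity relation (\ref{loc-glob}) replaces this by the global gradient $\|\nabla f\|_{\ell^2(G)}$, and collecting constants gives the bound $C_{\Xi}\,\|\nabla f\|_{\ell^2(G)}$ with $C_{\Xi}$ as in (\ref{const}). Finally, since $f\in\mathcal{X}_{\omega}(G)$ gives $\|\nabla f\|_{\ell^2(G)}\le\sqrt{\omega}\,\|f\|_{\ell^2(G)}$ by Definition~\ref{3}, and since $\|f\|_{\ell^2(G)}\le\|f\|_{\ell^1(G)}=\sum_{v}|f(v)|$ for every function, the claim follows.

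I do not expect a genuine obstacle: the only point requiring care is the bookkeeping of the two Cauchy--Schwarz steps, ensuring that the cluster-size factor $\sqrt{|S_j|}$ and the cluster-count factor $\sqrt{|J|}$ combine correctly into $C_{\Xi}=\sqrt{|J|\max_{j}(|S_j|\Theta_j)}$, and that the order of summation and the use of (\ref{loc-glob}) remain consistent. The passage $\|f\|_{\ell^2(G)}\le\|f\|_{\ell^1(G)}$ is what forces the right-hand side into the $\ell^1$ norm, and it explains why no spectral restriction beyond membership in $\mathcal{X}_{\omega}(G)$ is needed.
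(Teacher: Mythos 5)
Your proof is correct and takes essentially the same route as the paper's: the per-cluster Poincar\'e bound from Corollary~\ref{col} and Theorem~\ref{Main-Th}, Cauchy--Schwarz within each cluster (factor $\sqrt{|S_j|}$) and then across the clusters (factor $\sqrt{|J|}$) combined with (\ref{loc-glob}), and finally the defining inequality of $\mathcal{X}_{\omega}(G)$ together with $\|f\|_{\ell^{2}(G)}\le\|f\|_{\ell^{1}(G)}$. Your invocation of Definition~\ref{3} in the last step is in fact slightly cleaner than the paper's citation of the Bernstein inequality (\ref{B}), since the theorem is stated for $\mathcal{X}_{\omega}(G)$ rather than ${\bf E}_{\omega}(G)$.
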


\begin{proof}
According to Corollary \ref{col} and Theorem \ref{Main-Th} we have 
$$
 \sum_{v\in V(S_{j})} \left|f_{j}(v)-     \langle \zeta_{j}, f\rangle\chi_{j}(v)\right|^2              \leq   \Theta_{j}\|\nabla_{j} f_{j}\|_{\ell^{2}(S_{j})}^{2},
$$
and since 
$$
 \sum_{v\in V(S_{j})} \left|f_{j}(v)-    \langle \zeta_{j}, f\rangle\chi_{j}(v)\right| \leq 
 $$
 $$
|S_{j}|^{1/2}\left(\sum_{v\in V(S_{j})} \left|f_{j}(v)-      \langle \zeta_{j}, f\rangle\chi_{j}(v)\right|^2             \right)^{1/2}\leq \sqrt{|S_{j}|\Theta_{ j}} \> \|\nabla_{j} f_{j}\|_{\ell^{2}(S_{j})}, 
$$
we obtain
$$
\sum_{j\in J} \sum_{v\in V(S_{j})} \left|f_{j}(v)-     \langle \zeta_{j}, f\rangle\chi_{j}(v)\right| \leq \sum_{j\in J} \sqrt{|S_{j}|\Theta_{ j}}  \|\nabla_{j} f_{j}\|_{\ell^{2}(S_{j})}\leq
$$
$$
\sqrt{|J|\max_{j\in J}\left(|S_{j}|\Theta_{ j}\right)}  \|\nabla f\|_{\ell^{2}(G)}, 
$$
where we used the inequality 
$$
   \sum_{j\in J}   \|\nabla_{j} f_{j}\|_{\ell^2(S_{j})}   \leq \sqrt{|J|}\|\nabla f\|_{\ell^2(G)}.
$$
After all, due to (\ref{L-G}), (\ref{B}) and the inequality $\|f\|_{\ell^{2}(G)}\leq \|f\|_{\ell^{1}(G)}$
 we have
$$
  \sum_{j\in J}\sum_{v\in V(S_{j})} \left|f_{j}(v)-    \langle \zeta_{j}, f\rangle\chi_{j}(v)\right| \leq  \sqrt{\omega}\>C_{\Xi}  \sum_{v\in G}|f(v)|,  \>\>\>f\in \mathcal{X}_{\omega}(G),\>\>\>f_{j}=f|_{S_{j}},
 $$
here 
$
C_{\Xi}=\sqrt{    |J|\max_{j\in J}\left(  |S_{j}|\Theta_{ j} \right)  }.
$
Theorem is proven.

\end{proof}

Since ${\bf E}_{\omega}(G)\subset \mathcal{X}_{\omega}(G)$ we have the following corollary from the last theorem.
\begin{col}\label{corollary-gamma}
If
$
\gamma=\sqrt{\omega}\> C_{\Xi} 
$
then for all $f\in {\bf E}_{\omega}(G)$ 
\begin{equation}\label{fla-5.2}
  \sum_{j\in J}\sum_{v\in V(S_{j})} \left|f_{j}(v)-   \langle \zeta_{j}, f\rangle\chi_{j}(v)\right| \leq \gamma \sum_{v\in G}|f(v)|.  
\end{equation}

\end{col}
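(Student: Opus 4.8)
The plan is to treat Corollary~\ref{corollary-gamma} as an immediate specialization of Theorem~\ref{norm-one}. The genuinely analytic work—bounding the local discrepancies $f_{j}(v)-\langle\zeta_{j},f\rangle\chi_{j}(v)$ in $\ell^{1}$ by $\sqrt{\omega}\,C_{\Xi}\sum_{v\in G}|f(v)|$—has already been carried out for the larger class $\mathcal{X}_{\omega}(G)$, so all that remains is to verify that every bandlimited $f\in{\bf E}_{\omega}(G)$ lies in $\mathcal{X}_{\omega}(G)$ and then to read off the constant $\gamma$.

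First I would make explicit the inclusion ${\bf E}_{\omega}(G)\subset\mathcal{X}_{\omega}(G)$, which was asserted already in Section~\ref{Analysis}. By Definition~\ref{PW} a function $f\in{\bf E}_{\omega}(G)$ is spectrally supported on $[0,\omega]$, so the Bernstein inequality (\ref{B}) with $t=1/2$ gives $\|L^{1/2}f\|_{\ell^{2}(S)}\leq\sqrt{\omega}\,\|f\|_{\ell^{2}(S)}$. Combining this with the identity (\ref{L-G}), namely $\|L^{1/2}f\|_{\ell^{2}(S)}=\|\nabla f\|_{\ell^{2}(S)}$, yields $\|\nabla f\|_{\ell^{2}(S)}\leq\sqrt{\omega}\,\|f\|_{\ell^{2}(S)}$. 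By Definition~\ref{3} (with $\tau=\omega$) this is exactly the defining condition for membership in $\mathcal{X}_{\omega}(G)$, so $f\in\mathcal{X}_{\omega}(G)$.

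With the inclusion in hand I would simply apply Theorem~\ref{norm-one} to such an $f$: inequality (\ref{fla-norm-one}) holds verbatim, and setting $\gamma=\sqrt{\omega}\,C_{\Xi}$ with $C_{\Xi}$ as in (\ref{const}) rewrites the right-hand side $\sqrt{\omega}\,C_{\Xi}\sum_{v\in G}|f(v)|$ as $\gamma\sum_{v\in G}|f(v)|$, which is precisely (\ref{fla-5.2}). This closes the argument.

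I do not anticipate a genuine obstacle, since the corollary is a pure restriction of Theorem~\ref{norm-one} to a subclass of its hypotheses. The only point needing care is making the spectral inclusion ${\bf E}_{\omega}(G)\subset\mathcal{X}_{\omega}(G)$ explicit through (\ref{B}) and (\ref{L-G}), together with checking that the gradient norm controlling the estimate in Theorem~\ref{norm-one} is the global one on $G$, so that passing from $\mathcal{X}_{\omega}(G)$ to ${\bf E}_{\omega}(G)$ requires no re-indexing of the per-subgraph constants $\Theta_{j}$ and leaves $C_{\Xi}$ unchanged.
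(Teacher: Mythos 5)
Your proposal is correct and follows exactly the paper's own route: the paper derives Corollary \ref{corollary-gamma} in one line from the inclusion ${\bf E}_{\omega}(G)\subset \mathcal{X}_{\omega}(G)$ together with Theorem \ref{norm-one}, which is precisely your argument. Your explicit verification of the inclusion via the Bernstein inequality (\ref{B}) with $t=1/2$ and the identity (\ref{L-G}) simply spells out what the paper asserts without proof in Section \ref{Analysis}.
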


Consider a finite dimensional  space $\mathbb{R}^{|J|}$ of all sequences $\{\alpha_{j}\},\>1\leq j\leq |J|,$ equipped with the norm  
\begin{equation}\label{space E}
||| \{\alpha_{j}\}_{j\in J}|||=\sum_{j\in J}|\alpha_{j}|\>|S_{j}|, 
\end{equation}
and define the following \textit{sampling operator}  $Q$ as
\begin{equation}\label{Q}
Q: f\in {\bf E}_{\omega}(G)\mapsto \{f_{j}\}_{j\in J}=\{\langle \zeta_{j}, f\rangle\}_{j\in J}\in \mathbb{R}^{|J|}.
\end{equation}
The Plancherel-Polya  inequality (\ref{P-P-11}) already shows that $Q$ is continues and injective.  However, we will need a more accurate version of (\ref{P-P-11}) in the norm $\ell^{1}(G)$.

\begin{lem} If $\sqrt{\omega}\>C_{\Xi}=\gamma\in (0,\>1/2)$  then 
 the following double inequality holds
\begin{equation}\label{PP-norm-1}
(1-\gamma)\sum_{v\in V(G)}|f(v)|\leq \sum_{j}\langle  \zeta_{j}, f\rangle ||S_{j}|\leq (1+\gamma)\sum_{v\in V(G)}|f(v)|,
\end{equation}
for every $f\in {\bf E}_{\omega}(G)$.
\end{lem}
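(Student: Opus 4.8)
The plan is to obtain both halves of (\ref{PP-norm-1}) directly from the $\ell^1$-error bound of Corollary \ref{corollary-gamma} by two applications of the triangle inequality. First I would recast the middle quantity of (\ref{PP-norm-1}) into a double sum matching the summands in (\ref{fla-5.2}). Since $\chi_j(v)=1$ for every $v\in V(S_j)$, for each fixed $j$ one has
\[
\sum_{v\in V(S_j)}\bigl|\langle \zeta_j, f\rangle\,\chi_j(v)\bigr|=|\langle \zeta_j, f\rangle|\,|S_j|,
\]
so that $\sum_{j\in J}|\langle \zeta_j, f\rangle|\,|S_j|=\sum_{j\in J}\sum_{v\in V(S_j)}|\langle \zeta_j, f\rangle\,\chi_j(v)|$. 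Because $\mathcal{S}=\{S_j\}_{j\in J}$ is a disjoint cover and $f_j=f\chi_j$, one also has $\sum_{j\in J}\sum_{v\in V(S_j)}|f_j(v)|=\sum_{v\in V(G)}|f(v)|$. Thus all three members of (\ref{PP-norm-1}) are double sums over the same index set, coupled through the error quantity controlled by (\ref{fla-5.2}).

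For the upper estimate I would use, pointwise on each cluster, the inequality $|\langle \zeta_j, f\rangle\,\chi_j(v)|\le |f_j(v)|+|f_j(v)-\langle \zeta_j, f\rangle\,\chi_j(v)|$. Summing over $v\in V(S_j)$ and $j\in J$, the first group reassembles $\sum_{v}|f(v)|$ and the second is dominated by $\gamma\sum_{v}|f(v)|$ by Corollary \ref{corollary-gamma}; this yields the factor $1+\gamma$. For the lower estimate I would interchange the roles of the two terms, bounding $|f_j(v)|\le|\langle \zeta_j, f\rangle\,\chi_j(v)|+|f_j(v)-\langle \zeta_j, f\rangle\,\chi_j(v)|$, summing, and absorbing the difference term again by (\ref{fla-5.2}). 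Rearranging the resulting inequality
\[
\sum_{v\in V(G)}|f(v)|\le \sum_{j\in J}|\langle \zeta_j, f\rangle|\,|S_j|+\gamma\sum_{v\in V(G)}|f(v)|
\]
gives the lower bound with factor $1-\gamma$.

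There is no genuine analytic difficulty here: the content of the lemma is already carried by Corollary \ref{corollary-gamma}, and what remains is bookkeeping. The one point that deserves attention is to keep the inner summation over $v\in V(S_j)$ intact so that (\ref{fla-5.2}) applies verbatim, and to observe that the lower bound is informative only when $\gamma<1$. In fact $\gamma<1$ would already suffice for (\ref{PP-norm-1}) alone; the stronger hypothesis $\gamma\in(0,\,1/2)$ is imposed with an eye toward the subsequent use of this double inequality, where it guarantees that the sampling operator $Q$ of (\ref{Q}) is boundedly invertible and that $f$ is stably reconstructible from the samples $\{\langle \zeta_j, f\rangle\}_{j\in J}$.
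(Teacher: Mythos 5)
Your proposal is correct and follows essentially the same route as the paper: both halves of (\ref{PP-norm-1}) are obtained by the pointwise triangle inequality on each cluster (in the two directions), summing over $v\in V(S_j)$ and $j\in J$, and absorbing the error term via Corollary \ref{corollary-gamma}. Your closing observation is also accurate — the paper's argument only needs $\gamma<1$ for the lower bound to be informative, and the hypothesis $\gamma<1/2$ is used only later, in Lemma \ref{lowerbound} and the cubature theorem.
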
\>
\begin{proof}

According to Corollary   \ref{corollary-gamma}  we obtain for every $f\in {\bf E}_{\omega}(G)$
\begin{equation}\label{P-Main}
\left|\sum_{v\in V(G)} f(v)-\sum_{j}\langle  \zeta_{j}, f\rangle |S_{j}|\right|=\left|\sum_{j}\sum_{v\in S_{j}} f(v)-\sum_{j}  \langle  \zeta_{j}, f\rangle\sum_{v\in S_{j}}\chi_{j}(v) \right|\leq
$$
$$
\sum_{j}\sum_{v\in S_{j}} \left |f(v)-\langle  \zeta_{j}, f\rangle\chi_{j}(v)\right| \leq \gamma\sum_{v\in V(G)}|f(v)|,\>\>\>\>0<\gamma<1/2.
\end{equation}
Starting  with the trivial inequality 
$$
|f(v)|\leq |f(v)-\langle  \zeta_{j}, f\rangle|+|\langle  \zeta_{j}, f\rangle|,
$$
 one obtains the following one
\begin{equation}\label{PD_1}
\sum_{v\in V(G)}|f(v)|=\sum_{j}\sum_{v\in S_{j}}|f(v)|\leq 
$$
$$
\sum_{j}\sum_{v\in S_{j}}|f(v)-\langle  \zeta_{j}, f\rangle\chi_{j}(v) |+\sum_{j}|\langle  \zeta_{j}, f\rangle |\>|S_{j}|.
\end{equation}
Similarly, the inequality 
$$
|\langle \zeta_{j}, f\rangle |\leq |f(v)-\langle  \zeta_{j}, f\rangle|+|f(v)|,
$$
implies the next one
$$
\sum_{v\in S_{j}}|\langle \zeta_{j}, f\rangle |\chi_{j}(v)\leq \sum_{v\in S_{j}}|f(v)-\langle \zeta_{j}, f\rangle \chi_{j}(v)|+\sum_{v\in S_{j}}|f(v)|,
$$
and  then
\begin{equation}\label{PD_2}
\sum_{j}|\langle \zeta_{j}, f\rangle |\>|S_{j}|\leq \sum_{j}\sum_{v\in S_{j}}|f(v)-\langle \zeta_{j}, f\rangle \chi_{j}(v)|+\sum_{v\in V(G)}|f(v)|.
\end{equation}
Together these inequalities imply (\ref{PP-norm-1})  for every $f\in {\bf {E}}_{\omega}(G)$.
The statement is proven. 
\end{proof}

\begin{rem}\label{4.3}  We  note that the inequalities (\ref{graph-Poinc-00}) -(\ref{fla-5.2}), (\ref{PP-norm-1})  and the constant (\ref{const})  {\bf independent of the edges  outside of the clusters $S_{j}$}.
In other words, if one rearranges and mutually connects subgraphs $\{S_{j}\}_{j\in J}$ in any other way in order to obtain a new graph, (and will keep the set of functionals $\{\zeta_{j},\cdot\}$) these  inequalities  and the constant $C_{\Xi}$ would remain the same. 
\end{rem}

For  the rest of the paper it is necessary  to keep the assumption  that the constant $\gamma=\sqrt{\omega}\> C_{\Xi}$ is not greater than $1/2$. In this connection the important question arises:  if the interval $(0,\>C_{\Xi}/2)$ contains non-trivial eigenvalues of the Laplacian $L$ of the entire graph $G$. 

\bigskip

The next Lemma gives a partial answer to this question.

\begin{lem}\label{interval}
For every natural $N$ and every positive $\rho>0$ there exist community graphs with $N$ communities and such that the interval $[0,\>\rho)$ contains at least $N$ eigenvalues (counting with multiplicities) of the Laplace operator.
\end{lem}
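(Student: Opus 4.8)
The plan is to realize a community graph with $N$ communities as a weak perturbation of the disjoint union of $N$ connected pieces, and then to exhibit an explicit $N$-dimensional test space on which the Rayleigh quotient of the Laplacian is of order $\varepsilon$; the Courant--Fischer min-max principle will then pin the $N$ lowest eigenvalues below $\rho$ once $\varepsilon$ is small enough.

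First I would fix $N$ connected graphs $S_{1},\dots,S_{N}$, each with more than one vertex, to play the role of the communities, and build a graph $G_{\varepsilon}$ on $V=\bigcup_{j}S_{j}$ by keeping every intra-community edge with its original weight and adding a fixed family of inter-community (\emph{bridge}) edges, all assigned one and the same small weight $\varepsilon>0$ and chosen so that $G_{\varepsilon}$ is connected. Let $E_{\mathrm{ext}}$ denote the (fixed) number of bridge edges. Since the inter-community coupling is weak compared with the intra-community weights as soon as $\varepsilon$ is small, $G_{\varepsilon}$ is a community graph in the sense of \cite{F}.

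Next I would take the $N$ characteristic functions $\chi_{1},\dots,\chi_{N}$ of the communities as a test family. They are mutually orthogonal, hence span an $N$-dimensional subspace $V_{0}\subset\ell^{2}(G_{\varepsilon})$. Let $L$ denote the Laplacian (\ref{L}) of $G_{\varepsilon}$. For any $f=\sum_{j}c_{j}\chi_{j}\in V_{0}$ the function $f$ is constant on each community, so in the gradient norm (\ref{gr}) all intra-community edges drop out and only the bridge edges contribute; since each such edge carries weight $\varepsilon$ and $|f(u)-f(v)|^{2}\le 4\max_{j}|c_{j}|^{2}$ across it, and since $\langle Lf,f\rangle=\|L^{1/2}f\|_{\ell^{2}(G_{\varepsilon})}^{2}=\|\nabla f\|_{\ell^{2}(G_{\varepsilon})}^{2}$ by (\ref{L-G}), one obtains the bound
\begin{equation}\label{pes-rayq}
\frac{\langle Lf,f\rangle}{\|f\|_{\ell^{2}(G_{\varepsilon})}^{2}}
\le\frac{4\,\varepsilon\,E_{\mathrm{ext}}\,\max_{j}|c_{j}|^{2}}{\big(\min_{j}|S_{j}|\big)\,\max_{j}|c_{j}|^{2}}
=\frac{4\,\varepsilon\,E_{\mathrm{ext}}}{\min_{j}|S_{j}|}=:c\,\varepsilon,
\qquad 0\neq f\in V_{0},
\end{equation}
where I used $\|f\|_{\ell^{2}(G_{\varepsilon})}^{2}=\sum_{j}|c_{j}|^{2}|S_{j}|\ge\big(\min_{j}|S_{j}|\big)\max_{j}|c_{j}|^{2}$. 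The key point is that this estimate is uniform over all of $V_{0}$.

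Finally I would invoke the Courant--Fischer min-max principle. Ordering the eigenvalues of $L$ as $0=\lambda_{0}\le\lambda_{1}\le\cdots$, the existence of the $N$-dimensional subspace $V_{0}$ on which (\ref{pes-rayq}) holds forces $\lambda_{N-1}\le c\,\varepsilon$. Choosing $\varepsilon<\rho/c$ then yields $\lambda_{0}\le\lambda_{1}\le\cdots\le\lambda_{N-1}<\rho$, so the interval $[0,\rho)$ contains at least $N$ eigenvalues (counted with multiplicity) of the Laplacian of $G_{\varepsilon}$, which proves the lemma. I do not expect a serious obstacle here, since the statement is a soft perturbation fact; the only places needing care are the routine estimate leading to (\ref{pes-rayq}) --- which uses precisely that each $\chi_{j}$ is locally constant, so that intra-community edges contribute nothing --- and the correct index bookkeeping in the min-max step (the $N$ lowest eigenvalues are those with indices $0,\dots,N-1$). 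As an alternative one could argue by continuity of the spectrum of $L=L(\varepsilon)$ in $\varepsilon$, starting from the block-diagonal Laplacian $L(0)$ of the disjoint union, whose eigenvalue $0$ has multiplicity exactly $N$; I prefer the Rayleigh-quotient route because it yields the explicit threshold $\varepsilon<\rho/c$.
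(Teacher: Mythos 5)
Your proof is correct, and it takes a genuinely different technical route from the paper's, even though both rest on the same weak-coupling picture. The paper argues in the opposite direction and softly: it starts from a community graph, deletes all inter-community edges to obtain a disconnected graph $\widetilde{G}$ whose Laplacian has the eigenvalue $0$ with multiplicity $N$, and then asserts that a sufficiently slight (symmetric, non-negative) perturbation of $L_{\widetilde{G}}$ produces a graph whose Laplacian still has $N$ eigenvalues in $[0,\rho)$ --- an implicit appeal to continuity of eigenvalues in the matrix entries, with no quantitative threshold; this is exactly the ``alternative'' you mention and discard at the end. Your argument instead constructs the coupled graph explicitly with bridge edges of weight $\varepsilon$ and certifies the eigenvalue bound variationally: the span of the community indicators $\chi_{1},\dots,\chi_{N}$ is an $N$-dimensional subspace on which intra-community edges contribute nothing to the gradient norm (\ref{gr}), so by (\ref{L-G}) the Rayleigh quotient is at most $4\varepsilon E_{\mathrm{ext}}/\min_{j}|S_{j}|$, and Courant--Fischer forces $\lambda_{N-1}\le c\,\varepsilon<\rho$. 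What your route buys: an explicit admissible threshold $\varepsilon<\rho/c$; no unstated spectral-continuity lemma; and a construction that stays manifestly inside the class of graph Laplacians, whereas the paper's ``perturb the matrix'' step quietly requires that the perturbed matrix again be the Laplacian of a weighted graph (non-positive off-diagonal entries, zero row sums), which is not automatic for an arbitrary small symmetric perturbation. What the paper's route buys is brevity, and a formulation that starts from an arbitrary community graph rather than from a constructed one; but as a matter of rigor your version is the more complete proof, and since the lemma only claims existence of such graphs, constructing them explicitly is entirely adequate.
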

\begin{proof}
Suppose that $G$ is a community graph with $N$ communities. Let's remove from $G$ all the edges between communities. It will give a new disconnected graph $\widetilde{G}$ with $N$ components. For this graph $\widetilde{G}$ zero will be an eigenvalue of multiplicity $N$. By slightly perturbing the matrix of the Laplacian $L_{\widetilde{G}}$ (and keeping it symmetric and non-negative) we can construct a graph $\overline{G}$ whose Laplacian $L_{\overline{G}}$  will contain at least $N$ eigenvalues in the interval $[0,\>\rho)$. 
Lemma is proven.
\end{proof}
Clearly, more substantial perturbations can reduce the number of  eigenvalues of a perturbed Laplacian  in the interval $[0,\>\rho)$.

\section{A positive cubature formula}\label{cub-fla}

Suppose $\mathcal{E}$ is a linear normed space, $\mathcal{F }\subset  \mathcal{E}$ is a subspace of $\mathcal{E}$, and $\mathcal{C}$ is a convex cone in $\mathcal{E}$, which determines an order on $\mathcal{E}$ .
The following theorem can be found in \cite{B, N} and it will be used in proving our cubature formulas. 

\begin{thm}\label{BN}(Bauer-Namioka). Let $\Theta$ be a linear form on a subspace $ \mathcal{F}$ of $\mathcal{E}$. There exists a continuous positive linear extension of $\Theta$ to $\mathcal{E}$ if and only if there exists a neighborhood $\mathcal{U}$ of $0$ such
that the set $\Theta(\mathcal{F}\cap(\mathcal{U} + \mathcal{C}))$ is bounded from below.

\end{thm}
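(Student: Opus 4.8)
The plan is to recognize this as a Hahn--Banach-type dominated-extension theorem and to split the argument into the routine \emph{necessity} ("only if") direction and the substantive \emph{sufficiency} ("if") direction. The guiding idea is that ``positive'' means $\tilde{\Theta}(x)\ge 0$ for $x\in\mathcal{C}$, which by linearity is the same as $\tilde{\Theta}(-x)\le 0$ on $\mathcal{C}$; this reformulation is what allows both positivity and continuity to be encoded simultaneously as \emph{domination by a single sublinear functional}, after which the Hahn--Banach theorem does the work.

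For necessity, I would suppose a continuous positive linear extension $\tilde{\Theta}$ exists, with $|\tilde{\Theta}(x)|\le M\|x\|$, and simply take $\mathcal{U}$ to be the open unit ball. Any $f\in\mathcal{F}\cap(\mathcal{U}+\mathcal{C})$ factors as $f=u+c$ with $\|u\|<1$ and $c\in\mathcal{C}$, so $\Theta(f)=\tilde{\Theta}(u)+\tilde{\Theta}(c)\ge -M+0$, using $\tilde{\Theta}(u)\ge -M\|u\|\ge -M$ and $\tilde{\Theta}(c)\ge 0$. Hence $\Theta(\mathcal{F}\cap(\mathcal{U}+\mathcal{C}))$ is bounded below by $-M$.

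For sufficiency I would proceed as follows. Shrinking $\mathcal{U}$ if necessary (which only shrinks $\mathcal{F}\cap(\mathcal{U}+\mathcal{C})$, and so preserves the lower bound), I may assume $\mathcal{U}$ is the open ball of some radius $r>0$, and let $-m$ with $m>0$ be a lower bound for $\Theta$ on $\mathcal{F}\cap(\mathcal{U}+\mathcal{C})$. Since $\mathcal{C}$ is a cone, $\lambda(\mathcal{U}+\mathcal{C})=\lambda\mathcal{U}+\mathcal{C}$ for $\lambda>0$, and the convex set $\mathcal{U}+\mathcal{C}$ is absorbing because it contains the neighborhood $\mathcal{U}$ of $0$. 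Its Minkowski gauge
\[
q(y)=\inf\{\lambda>0:\ y\in\lambda\mathcal{U}+\mathcal{C}\}
\]
is therefore a finite sublinear functional with $q(y)\le\|y\|/r$. I then set $p(x)=m\,q(-x)$, which is again sublinear. The crucial point is that $p$ dominates $\Theta$ on $\mathcal{F}$: if $-f\in\lambda\mathcal{U}+\mathcal{C}$, then $-f/\lambda\in\mathcal{U}+\mathcal{C}$ and $-f/\lambda\in\mathcal{F}$, so the hypothesis gives $\Theta(-f/\lambda)\ge -m$, that is $\Theta(f)\le \lambda m$; taking the infimum over admissible $\lambda$ yields $\Theta(f)\le m\,q(-f)=p(f)$. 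Now Hahn--Banach furnishes a linear $\tilde{\Theta}$ on $\mathcal{E}$ with $\tilde{\Theta}|_{\mathcal{F}}=\Theta$ and $\tilde{\Theta}\le p$. Continuity follows from $\tilde{\Theta}(x)\le p(x)\le (m/r)\|x\|$ applied to both $x$ and $-x$, giving $|\tilde{\Theta}(x)|\le (m/r)\|x\|$. Positivity follows because for $c\in\mathcal{C}$ one has $c\in\lambda\mathcal{U}+\mathcal{C}$ for every $\lambda>0$ (take the $\mathcal{U}$-part to be $0$), so $q(c)=0$ and hence $\tilde{\Theta}(-c)\le p(-c)=m\,q(c)=0$, i.e. $\tilde{\Theta}(c)\ge 0$.

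I expect the main obstacle to be \emph{identifying the correct sublinear functional} $p(x)=m\,q(-x)$: it must be bounded by a multiple of the norm (forcing continuity), vanish on $-\mathcal{C}$ (forcing positivity), and dominate $\Theta$ on $\mathcal{F}$ (so that Hahn--Banach applies), and securing all three at once rests on reading the hypothesis ``$\Theta\ge -m$ on $\mathcal{F}\cap(\mathcal{U}+\mathcal{C})$'' as exactly the assertion that $\Theta$ is dominated by the gauge of $\mathcal{U}+\mathcal{C}$ scaled by $m$. The remaining verifications are routine bookkeeping: checking that $\mathcal{U}+\mathcal{C}$ is convex and absorbing so that $q$ is finite and sublinear, and using the cone identity $\lambda\mathcal{C}=\mathcal{C}$ consistently throughout.
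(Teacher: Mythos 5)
Your proof is correct, but there is nothing in the paper to compare it against: the paper does not prove this theorem at all. It is quoted as a known result, with citations to Bauer's and Namioka's 1957 papers, and is then used as a black box in the proof of the cubature formula. What you have reconstructed is essentially the classical argument behind the Bauer--Namioka theorem: encode continuity and positivity simultaneously as domination by the single sublinear functional $p(x)=m\,q(-x)$, where $q$ is the Minkowski gauge of the convex absorbing set $\mathcal{U}+\mathcal{C}$, and then invoke Hahn--Banach. Both directions check out: the necessity half is routine, and in the sufficiency half the key inequality $\Theta(f)\le m\,q(-f)$ on $\mathcal{F}$ is derived correctly from the lower-bound hypothesis (using that $\mathcal{F}$ is a subspace, so $-f/\lambda\in\mathcal{F}$), with continuity following from $q(y)\le\|y\|/r$ and positivity from $q(c)=0$ for $c\in\mathcal{C}$. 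One small point worth making explicit: you use $0\in\mathcal{C}$ twice --- once to conclude that $\mathcal{U}+\mathcal{C}$ contains $\mathcal{U}$ and is therefore absorbing, and once to write $c=0+c$ when showing $q(c)=0$. This is harmless, since a convex cone that determines an order is standardly assumed to contain $0$, as it does in the paper's application where $\mathcal{C}$ is the nonnegative orthant of $\mathbb{R}^{|J|}$, but it is an assumption your argument genuinely relies on and deserves a sentence.
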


We consider the sampling operator  $Q$ which is continuous and injective.
It is worth to emphasize that in general one can expect only an inclusion $\mathcal{R}(Q)\subset \mathbb{R}^{|J|}$ and not an equality 
$\mathcal{R}(Q)=\mathbb{R}^{|J|}.$ The continues inverse operator $Q^{-1}$  is defined on $\mathcal{R}(Q)$ :
\begin{equation}\label{Q}
Q^{-1}\left(\{f_{j}\}_{j\in J}\right)=f\in {\bf E}_{\omega}(G),
\end{equation}
and it means that every sequence $\{f_{j}\}_{j\in J}\in \mathcal{R}(Q)$ is mapped to a unique 
 function in ${\bf  E}_{\omega}(G)$ for which $f_{j}=\langle f, \zeta_{j}\rangle, \>1\leq j\leq |J|$.
 We  pick a number 
$
0<\gamma<1/2
$
 and introduce the functional $\Theta$ on $\mathcal{R}(Q)$  by using the formula
\begin{equation}\label{functional}
\Theta\left(\{f_{j}\}\right)=  \sum_{v\in V(G)}  Q^{-1}\left(\{f_{j}\}\right)(v)  -    \frac{1-2\gamma}{1-\gamma}|||\{f_{j}\}|||=
$$
$$
\sum_{v\in V(G)}f(v) -\frac{1-2\gamma}{1-\gamma}\sum_{j}\langle \zeta_{j}, f\rangle|S_{j}|,\>\>\>\>\{f_{j}\}\in \mathcal{R}(Q),
\end{equation}
where $f=Q^{-1}(\{f_{j}\}) $ is a unique function in ${\bf E}_{\omega}(G)$ for which $\langle f, \zeta_{j}\rangle=f_{j}$ for all $j$.

To apply Theorem \ref{BN} we treat $\left(\mathbb{R}^{|J|},\>|||\cdot |||\right)$ as the space $\mathcal{E}$, the $\left(\mathcal{R}(Q),\>||| \cdot |||\right)$ as the subspace $\mathcal{F}$, and also
$$
\mathcal{C}=\left\{  \{\alpha_{j}\}_{j\in J}\in \mathcal{E} :  \alpha_{j}\geq 0,\>1\leq j\leq |J|  \right\}, \>
\mathcal{U}=\left\{  \{\alpha_{j}\}_{j\in J}\in \mathcal{E} :   ||| \{\alpha_{j}\}_{j\in J}|||\leq 1      \right\}.
$$
 
  In the next lemma we need the assumption that  the functionals $\langle \zeta_{j}, \cdot\rangle $ which were defined in (\ref{zetas}) are positive.

  \begin{lem}\label{lowerbound} 
 If the functionals  $\langle \zeta_{j}, \cdot\rangle, \>j\in J, $ are positive and the positive $\gamma$ in (\ref{corollary-gamma}) is less than $1/2$ then the functional $\Theta$ is  bounded from below on the set  $\mathcal{F}\cap(\mathcal{U} + \mathcal{C})$. Namely, the following estimate holds
$$
\Theta (\mathcal{F}\cap(\mathcal{U} + \mathcal{C}))>\frac{2\gamma}{\gamma-1},\>\>\>\>0<\gamma<1/2.
$$
\end{lem}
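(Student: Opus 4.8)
The plan is to fix an arbitrary point $\{f_{j}\}\in\mathcal{F}\cap(\mathcal{U}+\mathcal{C})$ and bound $\Theta(\{f_{j}\})$ below by a constant that depends only on $\gamma$. First I would unwind the two descriptions such a point must satisfy. On one hand $f_{j}=\langle\zeta_{j},f\rangle$ for the unique $f=Q^{-1}(\{f_{j}\})\in{\bf E}_{\omega}(G)$; on the other hand, membership in $\mathcal{U}+\mathcal{C}$ means $f_{j}=u_{j}+c_{j}$ with $\sum_{j}|u_{j}|\,|S_{j}|=|||\{u_{j}\}|||\le 1$ and $c_{j}\ge 0$ for all $j$. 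The estimate must be uniform in both descriptions simultaneously.

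Next I would substitute the approximate cubature bound into the definition of $\Theta$. Corollary \ref{corollary-gamma}, in the form (\ref{fla-5.2}), yields $\left|\sum_{v}f(v)-\sum_{j}f_{j}|S_{j}|\right|\le\gamma\sum_{v}|f(v)|$, hence $\sum_{v}f(v)\ge\sum_{j}f_{j}|S_{j}|-\gamma\sum_{v}|f(v)|$. Since $1-\tfrac{1-2\gamma}{1-\gamma}=\tfrac{\gamma}{1-\gamma}$, this gives
\[
\Theta(\{f_{j}\})=\sum_{v}f(v)-\frac{1-2\gamma}{1-\gamma}\sum_{j}f_{j}|S_{j}|\ge\frac{\gamma}{1-\gamma}\sum_{j}f_{j}|S_{j}|-\gamma\sum_{v}|f(v)|.
\]
Feeding in the lower bound $\sum_{v}|f(v)|\le\tfrac{1}{1-\gamma}|||\{f_{j}\}|||$ from (\ref{PP-norm-1}) collapses the right-hand side to $\tfrac{\gamma}{1-\gamma}\left(\sum_{j}f_{j}|S_{j}|-|||\{f_{j}\}|||\right)$. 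This is the stage at which the standing positivity of the functionals $\langle\zeta_{j},\cdot\rangle$ is invoked, so that the signed linear sum $\sum_{j}f_{j}|S_{j}|$ appearing in $\Theta$ may legitimately be compared with the norm $|||\{f_{j}\}|||=\sum_{j}|f_{j}|\,|S_{j}|$ furnished by the $\ell^{1}$ Plancherel--Polya inequality.

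The crux is then elementary. The identity $\sum_{j}f_{j}|S_{j}|-|||\{f_{j}\}|||=-2\sum_{j:\,f_{j}<0}|f_{j}|\,|S_{j}|$ reduces the whole estimate to controlling the negative coordinates of $\{f_{j}\}$, and this is exactly what the cone component delivers: if $f_{j}<0$ then $c_{j}\ge 0$ forces $u_{j}=f_{j}-c_{j}\le f_{j}<0$, so $|u_{j}|\ge|f_{j}|$ and therefore
\[
\sum_{j:\,f_{j}<0}|f_{j}|\,|S_{j}|\le\sum_{j}|u_{j}|\,|S_{j}|=|||\{u_{j}\}|||\le 1.
\]
Combining the last three displays gives $\Theta(\{f_{j}\})\ge-\tfrac{2\gamma}{1-\gamma}=\tfrac{2\gamma}{\gamma-1}$, uniformly over $\mathcal{F}\cap(\mathcal{U}+\mathcal{C})$, which is the asserted lower bound.

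I expect the main obstacle to be the recognition in the previous step that the cone coordinate $c_{j}$ can only move a value in the positive direction, so that the \emph{negative} part of the sampling vector $\{f_{j}\}$ is entirely absorbed by the norm-bounded piece $\{u_{j}\}$. This is what converts an a priori unbounded set --- the cone $\mathcal{C}$ is unbounded --- into one on which the linear form $\Theta$ remains bounded from below, which is precisely the hypothesis needed to apply the Bauer--Namioka criterion, Theorem \ref{BN}. Everything else is routine once (\ref{fla-5.2}) and (\ref{PP-norm-1}) are available.
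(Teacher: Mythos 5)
Your proof is correct and follows essentially the same route as the paper's: both combine (\ref{P-Main}) with (\ref{PP-norm-1}) to reach the key inequality $\Theta(\{f_{j}\})\ge \frac{\gamma}{1-\gamma}\sum_{j}\bigl(f_{j}-|f_{j}|\bigr)|S_{j}|$ (the paper's (\ref{formula1})), and then use the decomposition $f_{j}=u_{j}+c_{j}$, $c_{j}\ge 0$, $|||\{u_{j}\}|||\le 1$, to absorb the negative coordinates into the norm-bounded component. Two minor remarks: your version is in fact slightly cleaner, since you never apply $Q^{-1}$ to the summands (the paper sets $g=Q^{-1}(\{g_{j}\})$, $h=Q^{-1}(\{h_{j}\})$ even though $\{g_{j}\},\{h_{j}\}$ need not lie in $\mathcal{R}(Q)$, an inessential but unjustified step) and you retain the factor $\gamma$ that the paper's last display drops, so you obtain exactly the stated constant $\frac{2\gamma}{\gamma-1}$; on the other hand, your aside that positivity of the functionals $\langle \zeta_{j},\cdot\rangle$ is what licenses comparing $\sum_{j}f_{j}|S_{j}|$ with $|||\{f_{j}\}|||$ is spurious --- that comparison holds for any real sequence, and (exactly as in the paper's own proof) your argument never actually uses that hypothesis.
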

\begin{proof}

From (\ref{P-Main}) one has 
$$
-\gamma\sum_{v\in V(G)}|f(v)|\leq
 \sum_{v\in V(G)} f(v)-\sum_{j}\langle  \zeta_{j}, f\rangle|S_{j}|,\>\>f\in {\bf E}_{\omega}(G),
$$
and  from (\ref{PP-norm-1})
$$
 \sum_{v\in V(G)}|f(v)|\leq\frac{1}{1-\gamma}\sum_{j}|\langle \zeta_{j}, f\rangle |\> |S_{j}|.
$$
Thus for $f\in {\bf E}_{\omega}(G)$
\begin{equation}
\sum_{j}\langle  \zeta_{j}, f\rangle|S_{j}|-\sum_{v\in V(G)} f(v)\leq \gamma\sum_{v\in V(G)}|f(v)| \leq \frac{\gamma}{1-\gamma}\sum_{j}|\langle  \zeta_{j}, f\rangle |\>|S_{j}|.
\end{equation}
From this one, by adding $ \frac{\gamma}{1-\gamma}\sum_{j}\langle  \zeta_{j}, f\rangle|S_{j}|$ to each side of the inequality
\begin{equation}
-\frac{\gamma}{1-\gamma}\sum_{j}|\langle \zeta_{j}, f\rangle |\>|S_{j}|\leq \sum_{v\in V(G)} f(v)-\sum_{j}\langle  \zeta_{j}, f\rangle|S_{j}|,
\end{equation}
we are getting the next inequality for  every $\>f\in {\bf E}_{\omega}(G)$
\begin{equation}\label{formula1}
\frac{\gamma}{1-\gamma}\sum_{j}\left( \langle  \zeta_{j}, f\rangle -|\langle  \zeta_{j}, f\rangle|\right)|S_{j}|\leq \sum_{v\in V(G)} f(v)-\frac{1-2\gamma}{1-\gamma}\sum_{j}\langle  \zeta_{j}, f\rangle|S_{j}|.
\end{equation}
If a vector $\{f_{j}\}$ belongs to $\mathcal{F}\cap (\mathcal{C}+\mathcal{V})$ then $\{f_{j}\}\in \mathcal{R}(Q)$ and $\{f_{j}\}=\{g_{j}\}+\{h_{j}\}$ where $\{g_{j}\}\in \mathcal{V}, \>\{h_{j}\}\in \mathcal{C}$.
In this situation  for  the following functions in ${\bf E}_{\omega}(G)$:
$$
f=Q^{-1}(\{f_{j}\}),\>g=Q^{-1}(\{g_{j}\}),\>h=Q^{-1}(\{h_{j}\}),
$$
uniquely determined by their sets of samples
$$
f_{j}=\langle\zeta_{j}, f\rangle,\>\>g_{j}=\langle\zeta_{j}, g\rangle,\>\>h_{j}=\langle\zeta_{j}, h\rangle,\>\>\>\>1\leq j\leq |J|,
$$
we have
$$
\Theta(\{f_{j}\})=\Theta\left(\{g_{j}+h_{j}\}\right)=\sum_{v\in V(G)}(g+h)(v) -\frac{1-2\gamma}{1-\gamma}|||\{g_{j}+h_{j}\}|||=
$$ 
$$
\sum_{v\in V(G)}(g+h)(v) -\frac{1-2\gamma}{1-\gamma}\sum_{j}(g_{j}+h_{j})|S_{j}|,
$$
and since $h_{j}\geq 0$, we obtain
$$
\Theta\left(\{g_{j}+h_{j}\}\right)\geq \frac{\gamma}{1-\gamma}\sum_{j}\left( g_{j}  + h_{j}-|g_{j}+h_{j}|\right)|S_{j}|\geq
$$
$$
 \frac{\gamma}{1-\gamma}\sum_{j}\left( g_{j} -|g_{j}|\right)|S_{j}|\geq  -\frac{2}{1-\gamma}\sum_{j} |g_{j}||S_{j}|\geq -\frac{2}{1-\gamma}.
$$
Theorem is proven.
\end{proof}
We are ready to prove our  cubature formula with positive weights.

\begin{thm} If the functionals  $\langle \zeta_{j}, \cdot\rangle, \>j\in J, $ are positive then for a given $0<\gamma<1/2$ there exist weights
$$
\frac{1-2\gamma}{1-\gamma}|S_{j}|\leq \varpi_{j}\leq  \frac{3-2\gamma}{1-\gamma}|S_{j}|,\>\>\>0<\gamma<1/2.
$$
such that the following formula holds for all $f\in {\bf E}_{\omega}(G)$  
$$
\sum_{v\in V(G)} f(v)=\sum_{j}\varpi_{j}\langle  \zeta_{j}, f\rangle,
$$
where  $0\leq \sqrt{ \omega} \leq \gamma/C_{\Xi}\>$, and $\>
C_{\Xi}=\sqrt{    |J|\max_{j\in J}\left(  |S_{j}|\Theta_{ j} \right)  }.
$
\end{thm}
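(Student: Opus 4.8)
The plan is to obtain the weights $\varpi_j$ as the coefficients of a positive linear functional that extends $\Theta$ from the range $\mathcal{R}(Q)$ to all of $\mathbb{R}^{|J|}$. All the work needed to trigger the Bauer--Namioka theorem (Theorem \ref{BN}) has in fact already been carried out: with $\mathcal{E}=(\mathbb{R}^{|J|},|||\cdot|||)$, $\mathcal{F}=\mathcal{R}(Q)$, $\mathcal{C}$ the positive orthant and $\mathcal{U}$ the unit ball, Lemma \ref{lowerbound} shows that $\Theta$ is bounded below on $\mathcal{F}\cap(\mathcal{U}+\mathcal{C})$, the bound read off from its proof being $-\frac{2}{1-\gamma}$. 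I would therefore invoke Theorem \ref{BN} to produce a continuous positive linear extension $\widetilde\Theta$ of $\Theta$ to $\mathcal{E}$.

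Because $\mathcal{E}$ is finite dimensional, $\widetilde\Theta$ is represented by a coefficient vector, $\widetilde\Theta(\{\alpha_j\})=\sum_{j}c_j\alpha_j$, and evaluating it on the standard basis vectors $e_j\in\mathcal{C}$ gives $c_j=\widetilde\Theta(e_j)\ge 0$ by positivity. Next I would feed in the sample vector $Qf=\{\langle\zeta_j,f\rangle\}$ of an arbitrary $f\in\mathbf{E}_\omega(G)$: since $Qf\in\mathcal{F}$ and $\widetilde\Theta$ agrees with $\Theta$ there, comparing the coefficient expression for $\widetilde\Theta(Qf)$ with the defining formula for $\Theta$ yields
$$
\sum_{v\in V(G)}f(v)=\sum_{j}\left(c_j+\frac{1-2\gamma}{1-\gamma}|S_j|\right)\langle\zeta_j,f\rangle .
$$
Thus $\varpi_j:=c_j+\frac{1-2\gamma}{1-\gamma}|S_j|$ furnishes the asserted cubature identity, and the lower estimate $\varpi_j\ge\frac{1-2\gamma}{1-\gamma}|S_j|$ is immediate from $c_j\ge0$.

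The step I expect to be the main obstacle is the matching upper bound $\varpi_j\le\frac{3-2\gamma}{1-\gamma}|S_j|$, equivalently $c_j\le\frac{2}{1-\gamma}|S_j|$. Since the norm dual to $|||\cdot|||$ is the weighted supremum $\max_j c_j/|S_j|$, this is exactly the operator-norm estimate $\|\widetilde\Theta\|\le\frac{2}{1-\gamma}$, and Bauer--Namioka as stated guarantees only positivity, not a norm bound. To close this gap I would use the quantitative content of its proof, namely that the extension can be chosen to retain the lower bound $\widetilde\Theta\ge-\frac{2}{1-\gamma}$ on all of $\mathcal{U}+\mathcal{C}$. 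As $\mathcal{U}$ is symmetric and contained in $\mathcal{U}+\mathcal{C}$, applying this to both $\alpha$ and $-\alpha$ for $\alpha\in\mathcal{U}$ forces $|\widetilde\Theta(\alpha)|\le\frac{2}{1-\gamma}$, hence $\|\widetilde\Theta\|\le\frac{2}{1-\gamma}$ and $c_j=\widetilde\Theta(e_j)\le\frac{2}{1-\gamma}|||e_j|||=\frac{2}{1-\gamma}|S_j|$, which is the desired bound.

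Finally I would record that the bandwidth enters only through $\gamma=\sqrt{\omega}\,C_\Xi\in(0,1/2)$, i.e. $0\le\sqrt{\omega}\le\gamma/C_\Xi$, which is precisely the range in which (\ref{P-Main}) and the two-sided inequality (\ref{PP-norm-1}) used inside Lemma \ref{lowerbound} remain valid. As a consistency check one can compute directly from (\ref{P-Main}) and (\ref{PP-norm-1}) that $\Theta$ has norm at most $\frac{2\gamma}{1-\gamma}$ on $\mathcal{F}$, comfortably below the value $\frac{2}{1-\gamma}$ allotted to the extension, so that a positive extension of controlled norm is indeed available.
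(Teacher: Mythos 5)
Your proposal is correct and follows essentially the same route as the paper's own proof: Bauer--Namioka (Theorem \ref{BN}) applied to the same data $\mathcal{E}=(\mathbb{R}^{|J|},|||\cdot|||)$, $\mathcal{F}=\mathcal{R}(Q)$, $\mathcal{C}$, $\mathcal{U}$, the lower bound from Lemma \ref{lowerbound}, representation of the positive extension by a nonnegative coefficient vector (your $c_{j}$ is the paper's $\sigma_{j}|S_{j}|$), and the same weights $\varpi_{j}=\bigl(\tfrac{1-2\gamma}{1-\gamma}+\sigma_{j}\bigr)|S_{j}|$. The one place you go beyond the paper is in fact a genuine improvement in rigor: the paper merely asserts that the extension $\widetilde{\Theta}$ is ``bounded by $\tfrac{2}{1-\gamma}$'', while you correctly identify that Bauer--Namioka as stated gives positivity and continuity but no specific norm bound, and you close this by the quantitative content of the Hahn--Banach construction (the extension keeps the lower bound $-\tfrac{2}{1-\gamma}$ on $\mathcal{U}+\mathcal{C}$, and symmetry of $\mathcal{U}$ then forces $\|\widetilde{\Theta}\|\leq\tfrac{2}{1-\gamma}$ in the dual norm $\max_{j}|c_{j}|/|S_{j}|$), which is exactly what the upper bound on the weights requires.
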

\begin{proof}
Theorems \ref{BN} and \ref{lowerbound} show that the functional $\Theta$ which was originally defined on the subspace $\mathcal{F}=\left(\mathcal{R}(Q),\>||| \cdot |||\right)$ can be extended to a functional $\widetilde{\Theta}$ defined 
on the entire space $\mathcal{E}=\left(\mathbb{R}^{|J|},\>|||\cdot |||\right)$. Moreover, $\widetilde{\Theta}$  will  is positive on $\mathcal{C}$ and bounded by  $\frac{2}{1-\gamma}$. It implies that there exists a vector $\{\sigma_{j}\}_{j\in J}$  such that 
$$
0\leq \sigma_{j}<\frac{2}{1-\gamma},
$$ 
and for which
$$
\widetilde{\Theta}(\{\alpha_{j}\})=\sum_{j}\sigma_{j}\alpha_{j}|S_{j}|,
$$
for every $\{\alpha_{j}\}_{j\in J}\in \mathcal{E}.$
 In particular, for a given $f\in {\bf E}_{\omega}(G)$ one has according to (\ref{functional}) 
$$
\Theta(f)=\sum_{v\in V(G)} f(v)  -\frac{1-2\gamma}{1-\gamma}\sum_{j}\langle  \zeta_{j}, f\rangle |S_{j}|=\sum_{j}\sigma_{j}\langle  \zeta_{j}, f\rangle|S_{j}|,
$$
or
$$
\sum_{v\in V(G)} f(v)=\sum_{j}\varpi_{j}\langle  \zeta_{j}, f\rangle,\>\>\>\>\>f\in {\bf E}_{\omega}(G),
$$
where 
$$
\varpi_{j}=\left(\frac{1-2\gamma}{1-\gamma}+\sigma_{j}\right)|S_{j}|.
$$
Since
$$
\frac{1-2\gamma}{1-\gamma}\leq \left(\frac{1-2\gamma}{1-\gamma}+\sigma_{j}\right)\leq  \frac{3-2\gamma}{1-\gamma},\>\>\>0<\gamma<1/2,
$$
we complete the proof.

\end{proof}

\section{Appendix}

\subsection{Proof of Lemma \ref{Lemma}}\label{Appendix}

  \begin{proof}
For the Fourier coefficients 
$\{c_{k}(f)=\langle f, \varphi_{k}\rangle\}$ one has 
$$
f=\sum_{k=0}c_{k}(f)\varphi_{k}
$$
 and  then if  $\Psi(g)=\left< \psi, g\right>$ we have
 $$
 0=\Psi(f)=c_{0}(f)\Psi(\varphi_{0})+\sum_{k=1}c_{k}(f)\Psi(\varphi_{k}).
 $$
 Using the Parseval equality and Schwartz inequality we obtain
\begin{equation}\label{f-la}
\|f\|^{2}_{H}\left|\Psi(\varphi_{0})\right|^{2}=|c_{0}(f)|^{2}\left|\Psi(\varphi_{0})\right|^{2}+\left|\Psi(\varphi_{0})\right|^{2}\sum_{k=1}|c_{k}(f)|^{2}=
$$
$$
\left|\sum_{k=1}c_{k}(f)\Psi(\varphi_{k})\right|^{2}+\left|\Psi(\varphi_{0})\right|^{2}\sum_{k=1}|c_{k}(f)|^{2}\leq
$$
$$
\sum_{k=1}|c_{k}(f)|^{2} \sum_{k=1}|\Psi(\varphi_{k})|^{2}            +\left|\Psi(\varphi_{0})\right|^{2}\sum_{k=1}|c_{k}(f)|^{2}.
\end{equation}
At the same time we have
$$
\psi=\Psi(\varphi_{0})\varphi_{0}+\sum_{k=1}\Psi(\varphi_{k})\varphi_{k},
$$
and from the Parseval formula
$$
\sum_{k=1}|\Psi(\varphi_{k})|^{2}=\|\psi\|^{2}_{H}-\left|\Psi(\varphi_{0})\right|^{2}.
$$
We plug the right-hand side of this formula into (\ref{f-la}) and obtain the following inequality 
$$
\left|\left<\psi, \varphi_{0}\right>\right|^{2}\|f\|^{2}_{H}\leq\|\psi\|^{2}_{H}\sum_{k=1}|c_{k}(f)|^{2}
\leq  \frac{\|\psi\|^{2}_{H}}{\lambda_{1}^{2}}\sum_{k=1}|\lambda_{k}c_{k}(f)|^{2}= \frac{\|\psi\|^{2}_{H}}{\lambda_{1}^{2}}\|Tf\|^{2}_{H}.
$$
Lemma is proven.
\end{proof}

\subsection{Hilbert frames} Frames in Hilbert spaces were introduced in \cite{DS}.
\begin{defn}
A set of vectors $\{\pi_{j}\}_{j\in J}$  in a Hilbert space $\mathcal{H}$ is called a frame if there exist constants $A, B>0$ such that for all $f\in \mathcal{H}$ 
\begin{equation}
A\|f\|^{2}_{2}\leq \sum_{j\in J}\left|\left<f,\pi_{j}\right>\right|^{2}     \leq B\|f\|_{2}^{2}.
\end{equation}
The largest $A$ and smallest $B$ are called lower and upper frame bounds.  
\end{defn}

The set of scalars $\{\left<f,\pi_{j}\right>\}$ represents a set of measurements of a signal $f$. To synthesize the signal $f$ from this set of measurements one can use  a so-called  (dual) frame $\{\Pi_{j}\}$ and then a reconstruction formula will look like  
\begin{equation}\label{Hfr}
f=\sum_{j\in J}\left<f,\pi_{j}\right>\Pi_{j}.
\end{equation}
Dual frames  are  not unique in general.  Moreover, it is difficult   to find a dual frame.
However, for frames with $A=B=1$ the decomposition and synthesis of functions can be done with the same frame. In other words
 \begin{equation}
f=\sum_{j\in J}\left<f,\pi_{j}\right>\pi_{j}.
\end{equation}
Such frames are known as Parseval (or tight) frames.
For example,
 three vectors in $\mathbf{R}^{2}$ with angles  $2\pi/3$ between them whose lengths are all  $\sqrt{2/3}$ form a Parseval frame.

\bigskip

\end{document}